\documentclass[11pt]{article}

\usepackage[utf8]{inputenc}
\pdfoutput=1
\usepackage[margin=1.1in,letterpaper]{geometry}
\usepackage{tikz}

\linespread{1.025}

\makeatletter
\newlength\aftertitskip     \newlength\beforetitskip
\newlength\interauthorskip  \newlength\aftermaketitskip

\setlength\aftertitskip{0.1in plus 0.2in minus 0.2in}
\setlength\beforetitskip{0.05in plus 0.08in minus 0.08in}
\setlength\interauthorskip{0.08in plus 0.1in minus 0.1in}
\setlength\aftermaketitskip{0.3in plus 0.1in minus 0.1in}
\setlength{\parindent}{0pt}
\setlength{\parskip}{11pt}

\def\maketitle{\par
 \begingroup
   \def\thefootnote{\fnsymbol{footnote}}
   \def\@makefnmark{\hbox to 4pt{$^{\@thefnmark}$\hss}}
   \@maketitle \@thanks
 \endgroup
\setcounter{footnote}{0}
 \let\maketitle\relax \let\@maketitle\relax
 \gdef\@thanks{}\gdef\@author{}\gdef\@title{}\let\thanks\relax}

\def\@startauthor{\noindent \normalsize\bf}
\def\@endauthor{}
\def\@starteditor{\noindent \small {\bf Editor:~}}
\def\@endeditor{\normalsize}
\def\@maketitle{\vbox{\hsize\textwidth
 \linewidth\hsize \vskip \beforetitskip
 {\begin{center} \LARGE\@title \par \end{center}} \vskip \aftertitskip
 {\def\and{\unskip\enspace{\rm and}\enspace}
  \def\addr{\small\it}
  \def\email{\hfill\small\tt}
  \def\name{\normalsize\bf}
  \def\AND{\@endauthor\rm\hss \vskip \interauthorskip \@startauthor}
  \@startauthor \@author \@endauthor}
}}

\makeatother

\pdfoutput=1

\usepackage{amsmath,amsthm}
\usepackage{graphicx}
\usepackage{color}
\usepackage{amssymb,amsfonts,amsxtra,mathrsfs,bm}
\usepackage{multicol}
\usepackage{multirow}
\usepackage{paralist}
\usepackage{xspace}
\usepackage{algorithm}
\usepackage{algpseudocode}
\usepackage[colorlinks=true, urlcolor = blue, citecolor=blue]{hyperref}
\usepackage{bbm}
\usepackage{nicefrac}
\usepackage{changepage}
\newtheorem{prop}{Proposition}

\title{Neural Algorithmic Reasoning for Approximate $k$-Coloring with Recursive Warm Starts}
\author{\name Knut Vanderbush
\email{knutv@stanford.edu}\\
\addr{Stanford University}
\AND
\name Melanie Weber 
\email{mweber@seas.harvard.edu}\\
\addr{Harvard University}}

\begin{document}

\maketitle

\begin{abstract}
\begin{adjustwidth}{-0.02in}{-0.02in}
Node coloring is the task of assigning colors to the nodes of a graph such that no two adjacent nodes have the same color, while using as few colors as possible. Node coloring is the most widely studied instance of graph coloring and of central importance in structural graph theory; major results include the Four Color Theorem and work on the Hadwiger-Nelson Problem. As an abstraction of classical combinatorial optimization tasks, such as scheduling and resource allocation, it is also rich in practical applications. Here, we focus on a relaxed version of node coloring, \emph{approximate \( k \)-coloring}, which is the task of assigning at most \( k \) colors to the nodes of a graph such that the number of edges whose endpoints have the same color is approximately minimized. While classical approaches leverage mathematical programming or SAT solvers, recent studies have explored the use of machine learning. We follow this route and explore the use of neural algorithmic reasoning for node coloring, specifically the use of graph neural networks (GNNs). We first present an optimized differentiable algorithm that improves a prior approach by Schuetz et al.~\cite{schuetz} with orthogonal node feature initialization and a loss function that penalizes conflicting edges more heavily when their endpoints have higher degree; the latter inspired by the classical result that a graph is \( k \)-colorable if and only if its \( k \)-core is \( k \)-colorable. Next, we introduce a lightweight greedy local search algorithm and show that it may be improved by recursively computing a \( (k-1) \)-coloring to use as a warm start. We then show that applying such \emph{recursive warm starts} to the GNN approach leads to further improvements. Numerical experiments on a range of different graph structures show that while the local search algorithms perform best on small inputs, the GNN exhibits superior performance at scale. The recursive warm start may be of independent interest beyond graph coloring for local search methods for combinatorial optimization.
\end{adjustwidth}
\end{abstract}

\section{Introduction}

Node coloring is the task of assigning colors to the vertices of a graph using as few colors as possible such that there are no \emph{monochromatic edges}, or edges whose endpoints have the same color. Node coloring has been studied extensively in the literature from both applied and theoretical perspectives. Practical applications include hard optimization problems, such as scheduling, resource allocation, and even the game of Sudoku, all of which can be abstracted to node coloring. On the other hand, much attention in structural graph theory has focused on node coloring. The famous Four Color Theorem \cite{fourcolor} shows that any geographical map can be colored using only four colors so that no two bordering regions have the same color. More generally, the \emph{chromatic number} of a graph is the minimal number of colors needed for a \emph{proper coloring}, meaning a coloring with no monochromatic edges. For certain classes of graphs this number is known, but in general its computation is NP-hard \cite{npcomplete}. This has led to a range of decades-open conjectures on the chromatic numbers of specific classes of graphs (e.g. the Hadwiger conjecture, a generalization of the Four Color Theorem~\cite{hadwiger}, and the Hadwiger-Nelson problem \cite{hadwigernelson,deGrey}), and the investigation of coloring problems in higher dimensions or under additional constraints (e.g. list coloring \cite{woodall}). Due to the high complexity of node coloring, there is also widespread interest in finding good approximate solutions to coloring problems.

In this work, we focus on approximate \( k \)-coloring, that being the task of assigning colors to the vertices of a graph using at most \( k \) colors such that the number of monochromatic edges is approximately minimized. This is equivalent to approximate Max-\( k \)-Cut, which asks for a partition of a graph's vertex set into \( k \) classes such that the number of edges between different classes is approximately maximized. If an approximate \( k \)-coloring algorithm finds a proper coloring of a graph, then \( k \) is an upper bound on the graph's chromatic number. We investigate the use of neural algorithmic reasoning to identify approximate colorings and to produce upper bounds on the chromatic number.

Previous literature has largely focused on classical discrete optimization techniques, such as mathematical programming and SAT solvers. While these methods are effective for small-scale instances of \(k\)-coloring, they often lack the efficiency and interpretability required to develop reliable algorithmic approaches at scale. For example, reducing graph coloring to SAT enables the use of SAT solvers; however, these solvers are poorly suited to identifying approximate solutions when no exact solution exists and typically incur infeasible runtimes on large graphs. Because scalability is essential both for practical applications of coloring problems and for use in mathematical research, the development of efficient methods for approximate \(k\)-coloring remains an exciting and important area of research.

The main focus of our study is a differentiable algorithm based on graph neural networks (GNNs). GNNs have become one of the most popular machine learning approaches for graph-structured data, making them a natural choice for coloring problems. Prior literature~\cite{li,schuetz} has obtained promising results for approximate \( k \)-coloring using simple GNNs and small-scale inputs. Another application of GNNs to graph coloring is \cite{ijaz}, in which a GNN is used to predict chromatic numbers of graphs. Our study seeks to develop improved GNN approaches for approximate $k$-coloring, guided by the question: \emph{does incorporating known structure or coloring heuristics boost GNN performance?}

\subsection{Overview and Related Work}

We first present an effective GNN-based differentiable algorithm, building on and improving an approach presented in \cite{schuetz}. The input consists of a graph \( G \) with vertex set \( V(G)=\{1,\dots,n\} \) and adjacency matrix \( \pmb{A}\in\mathbb{R}^{n\times n} \). We define a differentiable loss function \( \mathcal{L} \) such that for all \( \pmb{P}\in\mathbb{R}^{n\times k} \) with nonnegative entries and rows summing to \( 1 \), we have \[ \mathcal{L}(\pmb{P})=\sum_{\{i,j\}\in E(G)}\pmb{p}_i^\top\pmb{p}_j=\textstyle{\frac{1}{2}}\displaystyle\pmb{A}\cdot(\pmb{P}\pmb{P}^\top), \] where ``\( \cdot \)'' denotes the Frobenius inner product. This loss \( \mathcal{L}(\pmb{P}) \) is equal to the expected number of monochromatic edges in a random \( k \)-coloring of \( G \) in which each vertex \( i \) receives each color \( j \) with probability \( p_{ij} \) independently. Therefore, we can produce a \( k \)-coloring of \( G \) with few monochromatic edges by first producing a matrix \( \pmb{P} \) with small loss \( \mathcal{L}(\pmb{P}) \), then assigning to each vertex \( i \) the color \( j \) that maximizes \( p_{ij} \). To obtain the desired matrix \( \pmb{P} \), we find a local minimum of \( \mathcal{L}(\text{softmax}(\pmb{Q})) \) over \( \pmb{Q}\in\mathbb{R}^{n\times k} \), where the softmax is applied row-wise, using an optimization algorithm such as stochastic gradient descent, Adam, or AdamW.

Of course the loss function \( \mathcal{L} \) is very nonconvex in \( \pmb{P} \). Therefore, the approach above is susceptible to poor local minima. The authors of \cite{schuetz} suggest initializing a GNN with weights \( \pmb{W} \) and forward pass \( F_{\pmb{W}}:\mathbb{R}^{n\times d}\rightarrow\mathbb{R}^{n\times k} \), then finding a local minimum of \( \mathcal{L}(\text{softmax}(F_{\pmb{W}}(\pmb{X}))) \) over both \( \pmb{X}\in\mathbb{R}^{n\times d} \) and \( \pmb{W} \). This leads to substantially better results than just finding a local minimum of \( \mathcal{L}(\text{softmax}(\pmb{Q})) \) over \( \pmb{Q}\in\mathbb{R}^{n\times k} \). The authors of \cite{schuetz} implement this method using two GNN architectures, GCN \cite{gcn} and GraphSAGE \cite{sage}, leading to two algorithms, \textsc{PI-GCN} and \textsc{PI-SAGE}. We conduct a computational study of various modifications to \textsc{PI-GCN}, showing how each modification either improves or hinders performance. Our goals are twofold: first, we want to optimize the design choices of the baseline GNN. Second, in contrast to \cite{schuetz}, which tuned hyperparameters separately for each test graph, we aim to design a GNN model that generalizes well, so that our method may be used out of the box.

Three of our strongest improvements are as follows. First, we show that it is beneficial to initialize \( \pmb{X} \) to have orthogonal row vectors. Second, we present a new loss function that improves performance by penalizing monochromatic edges more heavily in dense parts of a graph. Third, we show that the method is improved by recursively calling itself to produce a \( (k-1) \)-coloring to use as a warm start. The latter in particular may be of independent interest, not only to GNN-based methods, but to any local search method, since such methods require an initial coloring and thus benefit from a warm start. Indeed, we show that a discrete version of our GNN approach also benefits vastly from this warm start trick.

After presenting these improvements, we test the improved method on various graphs and families of graphs with mathematically interesting  structure for which the chromatic number or a bound on the chromatic number is known. The method performs well in many of these cases. On planar graphs, for which an optimal upper bound on the chromatic number is known to be \( 4 \), the method almost always recovers an upper bound of \( 5 \), even on graphs of order \( 200 \) with as many edges as possible. On \( r \)-regular graphs, the method can almost always produce a \emph{Brooks coloring}, meaning an \( r \)-coloring, the existence of which is guaranteed by Brooks's Theorem \cite{brooks}. Furthermore, the method performs better on regular graphs than on Erd\H os-R\'enyi graphs of the same order and average degree. On the other hand, the method has some limitations when it comes to dealing with mathematical structure. On planar graphs, it usually does not recover the optimal upper bound of \( 4 \), and it performs worse than on Erd\H os-R\'enyi graphs of the same order and average degree. Furthermore, in certain highly symmetric graphs, it can get stuck at a highly symmetric poor local minimum of the loss function.

\subsection{Outline and main contributions}

The remainder of the paper is organized as follows. Section~\ref{background} reviews necessary background. In Section \ref{modgcn}, we present the improved GNN-based differentiable algorithm. In Section \ref{triplecolor}, we present a lightweight greedy algorithm and show that it may be improved by recursively calling itself to produce a \( (k-1) \)-coloring to use as a warm start. Furthermore, we show that applying the same trick to the GNN-based approach leads to further improvement. In Section \ref{casestudy}, we test our approach on various graphs and families of graphs for which the true chromatic number or an upper bound is known. While our method performs competitively among machine learning based approaches, a gap between the learned and the theoretically known bound remains. We also show in that section that for very dense input graphs, the method can suffer from an issue known as \emph{oversmoothing} \cite{oversmooth}, and we present some potential remedies to this issue. We conclude with a discussion of our results in Section~\ref{conclusion}.

Our two best algorithms are called \textsc{Full-GCN}, a GNN-based differentiable algorithm, and \textsc{Triple-Color}, a greedy algorithm; both leverage recursive warm starts. \textsc{Triple-Color} performs best overall on the test cases in our study. \textsc{Full-GCN} performs best among the GNN-based methods and outperforms \textsc{Triple-Color} when the order of the input graph is scaled up to about \( 1000 \), making it the best algorithm for large input graphs. Code for both methods, as well as the other algorithms presented in this paper, is publicly available at \url{https://github.com/Weber-GeoML/ColoringGNNs}.

\section{Background and Notation}\label{background}

Throughout the paper, we use the notation that a matrix \( \pmb{M}\in\mathbb{R}^{n\times d} \) has row vectors \( \pmb{m}_i \) for \( i\in\{1,\dots,n\} \) and entries \( m_{ij} \) or \( m_{i,j} \) for \( i\in\{1,\dots,n\},j\in\{1,\dots,d\} \). Following the usual convention in graph theory, we use the term \emph{order} to refer to the size of a graph's vertex set and \emph{size} to refer to the size of a graph's edge set. We use the terms \emph{vertex} and \emph{node} interchangeably.

\subsection{Node coloring}

A \emph{coloring} of a graph \( G \) is a map \( \varphi:V(G)\rightarrow\{1,2,\dots\} \). A coloring is a \emph{\( k \)-coloring} if its range is contained in \( \{1,\dots,k\} \). A coloring \( \varphi \) is \emph{proper} if for all \( v_iv_j\in E(G) \), we have \( \varphi(v_i)\ne\varphi(v_j) \). A graph \( G \) is \emph{\( k \)-colorable} if it admits a proper \( k \)-coloring. The \emph{chromatic number} \( \chi(G) \) of a graph \( G \) is the smallest \( k\in\{1,2,\dots\} \) such that \( G \) is \( k \)-colorable. Graph \( k \)-coloring usually refers to the task of finding a proper \( k \)-coloring of a graph, if one exists.

In a coloring of a graph, we say that an edge is \emph{monochromatic} if its endpoints have the same color. Approximate \( k \)-coloring is the task of finding a \( k \)-coloring of a graph such that the number of monochromatic edges is approximately minimized. One type of method for approximate \( k \)-coloring is a \emph{local search method}, in which we have some space \( \mathcal{S} \) representing potential \( k \)-colorings and some loss function \( \mathcal{L}:\mathcal{S}\rightarrow[0,\infty) \) representing the penalty of each potential \( k \)-coloring, and we attempt to find an element of \( \mathcal{S} \) with small loss by choosing some initial element \( s_0\in\mathcal{S} \), then iteratively finding new elements \( s_t \) for \( t=1,2, \) and so on such that \( s_{t+1} \) is close to \( s_t \) under some metric on \( \mathcal{S} \) and ideally has smaller loss.

A \emph{soft coloring} of a graph is an assignment of probability distributions over \( \{1,2,\dots\} \) to the vertices of the graph. Conversely, a coloring in the sense of the previous paragraphs is called a \emph{hard coloring}. We say that the \emph{loss} of a coloring (soft or hard) is the expected number of monochromatic edges assuming each vertex independently receives a color from its probability distribution. For a hard coloring, this is simply the number of monochromatic edges. Note that we will later define new loss functions that are not equal to this loss, but when we use the term ``loss'' by itself, we mean it in the sense of the previous two sentences.  

\subsection{Graph Neural Networks}

A \emph{graph neural network} (GNN) is a tool for learning vector-valued representations of a graph's vertices using the graph's geometric structure. A \emph{message-passing GNN} on a graph \( G \) with vertex set \( V(G)=\{1,\dots,n\} \) takes in some initial vectors \( \{\pmb{x}_i^0\}_{i=1}^n \) representing features of the vertices, then iteratively generates new features \( \{\pmb{x}_i^t\}_{i=1}^n \) for \( t=1,2, \) and so on, using an update rule of the form \[ \pmb{x}_i^{t+1}=\phi_t\left(\bigoplus_{j\in\mathcal{N}(i)\cup\{i\}}\psi_t(\pmb{x}_j^t)\right), \] where  \( \phi_t,\psi_t \) are some (often learnable) functions,  \( \bigoplus \) is a permutation-invariant aggregation function, and \( \mathcal{N}(i) \) is the set of neighbors of \( i \) in \( G \). The GNN eventually outputs \( \{\pmb{x}_i^t\}_{i=1}^n \) for some \( t \) called the \emph{depth} of the GNN. Each update is called a \emph{layer} of the GNN, and the final layer often uses a different function \( \phi_t \) or none at all so that the output has the desired form. One message-passing GNN architecture that we use predominantly throughout the paper is a \emph{Graph Convolutional Network} (GCN)~\cite{gcn}, in which the update rule is given by \[ \pmb{x}_i^{t+1}=\sigma_t\left({\pmb{W}_t}^\top\sum_{j\in\mathcal{N}(i)\cup\{i\}}\frac{e_{ij}}{\sqrt{\hat{d}_i\hat{d}_j}}\pmb{x}_j^t\right), \] where \( e_{ij} \) is the chosen weight of edge \( ij\in E(G) \), \( \hat{d}_j \) is the degree of vertex \( j\in V(G) \) accounting for edge weights and including self-loops, \( \pmb{W}_t \) is a learnable weights matrix, and \( \sigma_t \) is an activation function, ReLU by default. For our purposes, we set all edge weights to \( 1 \) and remove self-loops so that the update rule is given by \[ \pmb{x}_i^{t+1}=\sigma_t\left({\pmb{W}_t}^\top\sum_{j\in\mathcal{N}(i)}\frac{1}{\sqrt{d_id_j}}\pmb{x}_j^t\right), \] where now \( d_j \) is simply the degree of vertex \( j\in V(G) \) in the usual sense. We use ReLU activation on all but the final layer, and we use no activation on the final layer. Other GNN architectures tested in this study are introduced in Appendix~\ref{extended_background}.

\section{Improved GNN model for approximate $k$-coloring}\label{modgcn}

\subsection{Modified Design Choices}\label{originalmods}

In this section, we describe the modifications to \textsc{PI-GCN} that we incorporate in our GNN model to boost performance. We focus for now on modifying \textsc{PI-GCN} rather than \textsc{PI-SAGE} despite the fact that \textsc{PI-SAGE} was reported to outperform \textsc{PI-GCN} in~\cite{schuetz}. The reason for this is that when testing \textsc{PI-SAGE}, we found that the optimizer's learning rate values given in \cite{schuetz} were large enough that the algorithm failed to converge, resulting in a more extensive search of the parameter space that eventually found a better solution than \textsc{PI-GCN}. When we tuned down the learning rate to make the algorithm converge, \textsc{PI-SAGE} performed no better than \textsc{PI-GCN}. We are presently interested in enforcing that the method converges to a local minimum; that way, we can determine which modifications help the method find better local minima. Our proposed modifications are as follows.

\begin{itemize}
\item\textbf{Initial node embeddings:}
 We show that making the rows of \( \pmb{X} \) orthogonal, thus making the initial node embeddings as ``distinct'' as possible, boosts performance. A particular orthogonal embedding that we leverage here is setting \( \pmb{X} \) to be a truncated identity matrix.

\item\textbf{Loss function:} Instead of using the loss function \( \mathcal{L} \) described in the introduction, we use the loss function \[ \mathcal{L}(\pmb{P})=\sum_{\{i,j\}\in E(G)}\frac{\text{deg}(i)^p+\text{deg}(j)^p}{2}\pmb{p}_i^\top\pmb{p}_j=\textstyle{\frac{1}{2}}\displaystyle(\text{diag}(\pmb{A}\pmb{1}_n)^p\pmb{A})\cdot(\pmb{P}\pmb{P}^\top) \] for some \( p\in\{1,2,\dots\} \). Note that taking \( p=0 \) recovers the original loss function. This new loss function scales up the loss contribution of each monochromatic edge by the average \( p \)th power of the degree of an endpoint of the edge. Thus, monochromatic edges are penalized more heavily in denser parts of the graph, with a more prominent increase in the penalty when \( p \) is larger. We show that this new loss function boosts performance, with the most prominent improvement occurring when \( p\approx3 \).
\end{itemize}

Each modification was tested individually against a ``default'' version of the algorithm in which each entry in \( \pmb{X} \) was drawn independently from the Standard Normal distribution and the loss function was the default loss function described in the introduction. See Appendix \ref{othermods} for more details on the default algorithm. For all GNN-based algorithms in this paper, we used \( 200 \) features for the initial embedding and for the output of all but the final layer, and we used the AdamW optimizer with default learning rate \( 0.001 \). In our initial experiments, we found that increasing the number of features per layer only ever improved performance, that changing the learning rate did not substantially change performance as long as the algorithm converged, and that using vanilla gradient descent rather than Adam or AdamW was detrimental due to the former not scaling gradients.

Since we are interested in methods that perform well in the general purpose, the performance of each modification was tested on Erd\H os-R\'enyi graphs. An \emph{Erd\H os-R\'enyi graph} \( G(n,p) \) is a random graph with exactly \( n \) vertices such that each of the \( \binom{n}{2} \) potential edges is included with probability \( p \) independently. The expected number of edges is therefore \( \binom{n}{2}p \). Taking \( p=d/(n-1) \), we get that the expected degree of each vertex is \( d \) and the expected number of edges is \( nd/2 \). Letting \( k_d \) be the smallest positive integer \( k \) such that \( 2k\log(k)>d \), the paper \cite{chi} proves that when \( p=d/n \), we have \( \mathbb{P}(\chi(G(n,p))\in\{k_d,k_d+1\})\rightarrow1 \) as \( n\rightarrow\infty \). It follows that the same result holds when \( p=d/(n-1) \); this is intuitive, but see Proposition \ref{prop1} in Appendix \ref{proofs} for a proof in case one is desired. In our tests, we always use \( p=d/(n-1) \), and we allow \( k_d+1 \) colors to be used so that there very likely exists a proper coloring.

Additional potential modifications, which did \emph{not} lead to an improvement, are described in Appendix \ref{othermods}. In particular, we show that the model is not improved by choosing alternative base layers such as GIN~\cite{gin}, GAT~\cite{gat}, or GraphSAGE~\cite{sage} (when the learning rate is tuned down).

\subsection{Experimental Results}

For each \( d\in\{10,16,20\} \) (these are the largest even \( d \) values for which \( k_d+1=5,6,7 \) respectively), we recorded the average loss of hard colorings produced by the default algorithm and each modified algorithm on \( 100 \) Erd\H os-R\'enyi graphs of order \( n=200 \) in the tables below. Each table lists approximate 95\% confidence intervals for each modification's true expected loss.

\begin{itemize}
\item\textbf{Initial node embeddings:}
\[ \begin{array}{c|ccc}&d=10&d=16&d=20\\\hline
\text{Default}&8.62\pm0.56&20.16\pm0.79&20.28\pm0.85\\
\text{Orthogonal}&7.23\pm0.56&\pmb{17.94\pm0.79}&\pmb{19.25\pm0.82}\\
\text{Identity}&\pmb{6.88\pm0.48}&17.95\pm0.73&19.81\pm0.82
\end{array} \]
\item\textbf{Loss function:} 
\[ \begin{array}{c|ccc}&d=10&d=16&d=20\\\hline
\text{Default}&8.62\pm0.56&20.16\pm0.79&20.28\pm0.85\\
p=1&6.41\pm0.51&\pmb{16.62\pm0.73}&17.64\pm0.75\\
p=2&5.94\pm0.46&16.77\pm0.77&16.75\pm0.86\\
p=3&\pmb{5.37\pm0.49}&17.15\pm0.79&\pmb{16.67\pm0.85}\\
p=4&6.09\pm0.49&18.05\pm0.86&17.17\pm0.75\\
p=5&6.70\pm0.58&18.00\pm0.89&17.60\pm0.75\\
p=6&6.56\pm0.52&18.49\pm0.90&19.52\pm0.87
\end{array} \]
\end{itemize}

Based on the results, we propose the following modifications. For the initial embedding \( \pmb{X} \), we use orthogonal row vectors since this leads to an improvement for all three \( d \) values that is statistically significant for \( d=10 \) and \( d=16 \), and it is the best improvement for \( d=16 \) and \( d=20 \), only being outperformed by the identity for \( d=10 \) by an insignificant amount. For the loss function, we use the degree-power loss function with \( p=3 \) since this leads to a statistically significant improvement for all three \( d \) values, and it is the best improvement for \( d=10 \) and \( d=20 \), only being outperformed by \( p=1 \) for \( d=16 \) by an insignificant amount.

Next, we would like to verify that the modifications we have suggested are stable. That is, we would like to check that after making these modifications, no other modification leads to further improvements. To that end, we repeat the previous experiment, but now with the ``default'' algorithm using the modifications we have suggested. The results are below.

\begin{itemize}
\item\textbf{Initial node embeddings:}
\[ \begin{array}{c|ccc}&d=10&d=16&d=20\\\hline
\text{Default}&5.06\pm0.41&15.60\pm0.79&16.11\pm0.77\\
\text{Normal}&5.91\pm0.49&17.23\pm0.82&16.30\pm0.77\\
\text{Identity}&\pmb{4.96\pm0.42}&\pmb{15.17\pm0.83}&\pmb{15.67\pm0.75}
\end{array} \]
\item\textbf{Loss function:} 
\[ \begin{array}{c|ccc}&d=10&d=16&d=20\\\hline
p=0&6.97\pm0.52&18.37\pm0.82&19.24\pm0.75\\
p=1&5.26\pm0.44&15.83\pm0.74&16.11\pm0.69\\
p=2&\pmb{4.49\pm0.46}&\pmb{15.28\pm0.68}&\pmb{15.94\pm0.81}\\
\text{Default}&5.06\pm0.41&15.60\pm0.79&16.11\pm0.77\\
p=4&4.91\pm0.46&15.94\pm0.68&16.14\pm0.69\\
p=5&5.12\pm0.48&17.12\pm0.76&17.91\pm0.79\\
p=6&5.47\pm0.48&17.76\pm0.76&18.30\pm0.91
\end{array} \]
\end{itemize}

Each modification is now outperformed by another choice for all three \( d \) values. However, not one of these differences is significant. Therefore, we maintain the new default, though we acknowledge that the results are inconclusive as to whether these are the exact optimal choices. Thankfully, the average loss incurred by the new default compared to the old default has dropped by around 20\% for \( d=16 \) and \( d=20 \) and 40\% for \( d=10 \).

\subsection{Analysis}

We now suggest some intuitive, non-rigorous theories as to why these modifications are beneficial. First, the orthogonal and identity embeddings work quite well. This may be because making the initial vertex embeddings as ``distinct'' as possible in some sense prevents the optimizer from modifying two vertices at the same time when it intends to modify them separately. 

For the loss function, switching to the degree-power loss function is very effective. This makes sense because the new loss function penalizes monochromatic edges more heavily in dense parts of the graph, and it is generally easier to resolve monochromatic edges in sparse parts of a graph than in dense parts. For example, any \( k \)-coloring of the \( k \)-core of a graph extends to a \( k \)-coloring of the entire graph with no extra monochromatic edges. A more direct argument for why the new loss function would be beneficial is that it is easier to resolve a monochromatic edge when its endpoints have fewer neighbors, since a vertex having fewer neighbors means having fewer colors among its neighbors and thus more potential colors to switch to. An example is shown in Figure \ref{lossexample}. Another effective application of the new loss function is shown in Figure \ref{badminimum}.

\begin{figure}
\centering
\includegraphics[width=0.8\textwidth]{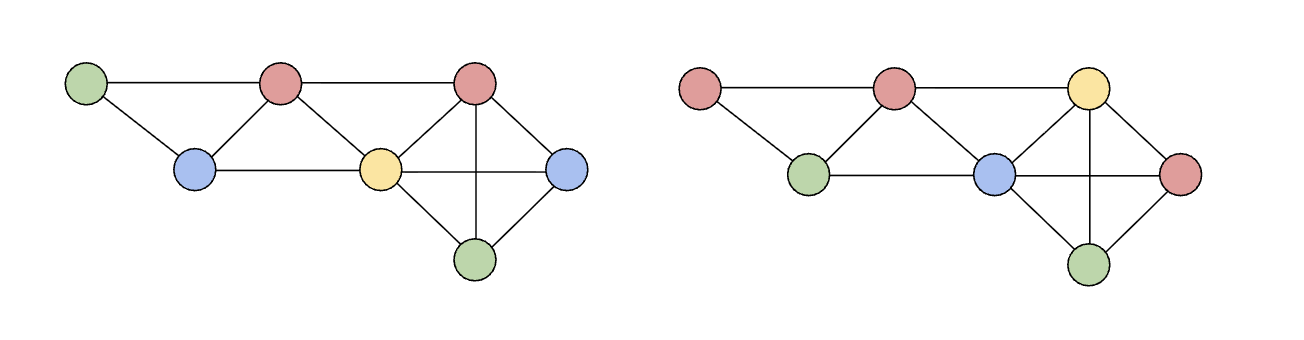}
\caption{Suppose we want a proper \( 4 \)-coloring of this graph. In the first coloring, each endpoint of the monochromatic edge has degree \( 4 \) and has every color among its neighbors. Therefore, the monochromatic edge cannot be resolved by changing any individual vertex's color. In the second coloring, one endpoint of the monochromatic edge has degree \( 2 \), so we can easily obtain a proper coloring by changing the color of that vertex to blue or yellow. Therefore, it would be wise to incentivize the optimizer to prioritize the second coloring over the first, since it is easier to find a solution from the second coloring. With the original loss function, however, the optimizer does not prioritize either of these colorings over the other.}
\label{lossexample}
\end{figure}

\subsection{Performance Evaluation}

We would like to know how well our optimized GCN algorithm, which we call \textsc{Mod-GCN}, performs on average for Erd\H os-R\'enyi graphs of many different orders and sizes. Figure \ref{erdos_experiment_2}, in orange, shows the result of running \textsc{Mod-GCN} on \( 100 \) different Erd\H os-R\'enyi graphs for all \( n\in\{10,20,\dots,200\} \) and \( d\in\{2,4,\dots,20\} \), then recording the average loss and an approximate 95\% confidence interval for the true mean. One striking result is that the average loss is almost perfectly linear in \( n \) for most \( d \) and sufficiently large \( n \). This justifies our decision to keep \( n=200 \) fixed in the previous experiment, since it suggests that an improvement for any particular \( n \) is an improvement for all \( n \). To better visualize the trend, linear regressions are plotted based on the data for \( n\in\{110,120,\dots,200\} \). In fact, we will see that a similar linear trend holds for many different algorithms throughout this paper. Of course each algorithm's expected loss is asymptotically at most linear in \( n \) since the expected size of the graph is linear in \( n \). Nonetheless, it is striking that for each algorithm, the average loss follows a particular line for most \( n \) between \( 10 \) and \( 200 \). The statement that the expected loss of an algorithm is asymptotic to a particular line through the origin is equivalent to the statement that the probability of a random edge being colored monochromatically is asymptotically constant (see Proposition \ref{prop2} in Appendix \ref{proofs} for a formal proof), which would not be too surprising since the average degree is held constant.

It would be interesting to know whether the linear trend observed in \textsc{Mod-GCN}'s average loss for \( n\in\{110,120,\dots,200\} \) continues to hold for larger \( n \). Figure \ref{larger_experiment} answers this question, showing that the linear trend tends to underestimate the true mean for \( n=500 \) and \( n=1000 \), but often not by too large of an amount.

To evaluate the performance of \textsc{Mod-GCN} in more specific cases than Erd\H os-R\'enyi graphs, we test it on the same graphs that were used as test inputs in \cite{schuetz} and compare its performance to that of the algorithms in \cite{schuetz}. The results are recorded in Figure \ref{casestudy_experiment}. Thankfully, \textsc{Mod-GCN} outperforms \textsc{PI-GCN}. \textsc{Mod-GCN} does not outperform \textsc{PI-SAGE}, but then again, we are presently more interested in exploring which design choices lead to improvements for the GCN algorithm than strictly outperforming the algorithms in \cite{schuetz}. In Section \ref{triplecolor}, we show that \textsc{PI-SAGE} is actually beat by a relatively lightweight greedy algorithm.

\section{Recursive Warm Starts}\label{triplecolor}

In this section, we introduce a lightweight greedy local search algorithm for approximate \(k\)-coloring and demonstrate that recursive warm starts based on approximate \((k-1)\)-colorings lead to substantial performance improvements. We then show how this idea can be incorporated into the GNN-based approach, further boosting its effectiveness.

The strategy of integrating classical algorithmic ideas into machine learning methods, commonly referred to as \emph{algorithmic alignment}~\cite{algo-align}, has recently attracted significant attention in ML-based combinatorial optimization~\cite{steiner,nerem,thien}. Our proposed recursive warm starts can be seen as an instance of this paradigm.

\subsection{\textsc{Discrete-Color}: A Greedy Local Search Method}

Consider the following alternative to the GCN algorithm. In order to \( k \)-color a graph, we start by producing a random \( k \)-coloring. Then we perform whichever individual color switch leads to the greatest decrease in the number of monochromatic edges, with ties broken at random, first uniformly over the potential vertices, then uniformly over the potential colors. We repeat this until there is no more individual color switch that leads to a decrease in the number of monochromatic edges. This is essentially equivalent to the algorithm described in the introduction, but with the loss function \( \mathcal{L} \) being optimized over the discrete space of hard colorings rather than over the continuous space of soft colorings. In particular, it is a local search algorithm. We call this algorithm \textsc{Discrete-Color}.

\subsection{Recursive Warm Starts}

We define a new algorithm called \textsc{Full-Color} that works exactly the same way as \textsc{Discrete-Color}, except instead of starting with a random \( k \)-coloring, it recursively calls \textsc{Full-Color} to produce a \( (k-1) \)-coloring, then uses that coloring as the starting point for the \( k \)-coloring. In other words, \textsc{Full-Color} starts by producing the unique \( 1 \)-coloring, then turns that \( 1 \)-coloring into a \( 2 \)-coloring, then turns that \( 2 \)-coloring into a \( 3 \)-coloring, and so on.

It may not be apparent right away whether a random \( k \)-coloring or a strong \( (k-1) \)-coloring would work as a better starting point for \textsc{Discrete-Color}. The random \( k \)-coloring, though initially having a large number of monochromatic edges, may leave enough room for improvement due to not already being modified by any algorithm that the end result is the same. However, our numerical results suggest that \textsc{Full-Color} far outperforms \textsc{Discrete-Color}.

Figure \ref{erdos_experiment_1} shows the results of repeating the experiment of Figure \ref{erdos_experiment_2} for the algorithms \textsc{Discrete-Color} and \textsc{Full-Color}. Since \textsc{Discrete-Color} and \textsc{Full-Color} are much faster than \textsc{Mod-GCN}, we tested \( 1000 \) Erd\H os-R\'enyi graphs for each \( n \) and \( d \) rather than \( 100 \). We notice that \textsc{Full-Color} outperforms \textsc{Discrete-Color} in all cases by a striking amount. We also see that for both \textsc{Discrete-Color} and \textsc{Full-Color}, the average loss follows an almost perfectly linear trend for sufficiently large \( n \), just as we observed for \textsc{Mod-GCN}. \textsc{Mod-GCN} outperforms both \textsc{Discrete-Color} and \textsc{Full-Color}, indicating that there is merit in optimizing \( \mathcal{L} \) over the continuous space rather than the discrete space.

\subsection{GNN Algorithm with Recursive Warm Starts}

Because of the striking improvement that we observe when using \textsc{Full-Color} rather than \textsc{Discrete-Color}, we would like to apply the same trick to \textsc{Mod-GCN}. In order to do so, all we need to do is find a way to choose which coloring the optimizer uses as a starting point in \textsc{Mod-GCN}. This is slightly easier said than done, since the values of \( \pmb{X} \) do not directly correspond to color probabilities in a tractable way. Therefore, we use the following procedure. Given a coloring \( \varphi:\{1,\dots,n\}\rightarrow\{1,\dots,k\} \), we first train the GCN to predict the coloring \( \varphi \) by optimizing the loss function \[ \lVert\text{softmax}(F_{\pmb{W}}(\pmb{X}))-\pmb{P'}\rVert_\text{F}^2 \] over \( \pmb{X} \) and \( \pmb{W} \), where \( p'_{i,\varphi(i)}=0.55 \) and \( p'_{ij}=\frac{0.45}{k-1} \) for all \( j\ne \varphi(i) \). Then, using the final values \( \pmb{X} \) and \( \pmb{W} \) as the starting point, we optimize the usual modified loss function.

Using this procedure, we had \textsc{Mod-GCN} mimic the strategy of \textsc{Full-Color} by first having it use the unique \( 1 \)-coloring as the starting point to produce a \( 2 \)-coloring, then having it use that \( 2 \)-coloring as the starting point to produce a \( 3 \)-coloring, and so on. The decision to use \( p'_{i,\varphi(i)}=0.55 \) rather than a different value came from testing a range of values. When we used \( p'_{i,\varphi(i)}\approx0.9 \) or larger, the algorithm only performed about as well as \textsc{Full-Color}; perhaps using such a hard coloring as the starting point takes away the algorithm's advantage of optimizing over soft colorings rather than hard colorings. On the other hand, of course, if \( p'_{i,\varphi(i)} \) is too small, then the algorithm barely uses the information in each new coloring, meaning it barely uses the strategy of \textsc{Full-Color}. We observed the best results when \( p_{i,\varphi(i)} \) was in the ``sweet spot'' of around \( 0.4 \) to \( 0.6 \).

The experiment of Figure \ref{erdos_experiment_2} on \textsc{Mod-GCN} was repeated for \textsc{Full-GCN}, and the results are shown in pink in Figure \ref{erdos_experiment_2}. We see that using the \textsc{Full-Color} trick in \textsc{Full-GCN} has made it stronger than \textsc{Mod-GCN}, just as we hoped. We also see again that the average loss of \textsc{Full-GCN} is linear in \( n \) for sufficiently large \( n \). As with \textsc{Mod-GCN}, it would be interesting to know whether this trend continues for larger \( n \). Figure \ref{larger_experiment} answers this question, showing that the linear trend is usually quite accurate for \( n=500 \) and \( n=1000 \) provided that the measurements for \( n\in\{110,120,\dots,200\} \) were not too noisy. Finally, the experiment of Figure \ref{casestudy_experiment} on \textsc{Mod-GCN} was repeated for \textsc{Full-GCN}, and the results are recorded in new columns of Figure \ref{casestudy_experiment}. \textsc{Full-GCN} continues to outperform \textsc{Mod-GCN} in these test cases.

\subsection{Pushing \textsc{Full-Color} to the Limit}

Because \textsc{Full-Color} is so lightweight compared to \textsc{Mod-GCN}, its runtime is several orders of magnitude faster. Therefore, as an alternative to \textsc{Mod-GCN} and \textsc{Full-GCN}, it may be interesting to know what level of performance we can achieve by using \textsc{Full-Color} with many random restarts. Based on this idea, we propose the following algorithm, called \textsc{Triple-Color}. On input a graph, a target number of colors \( k \), and an initial \( k' \)-coloring, \textsc{Triple-Color} calls \textsc{Discrete-Color} three separate times on the \( k' \)-coloring to produce three separate \( (k'+1) \)-colorings. It then recursively calls \textsc{Triple-Color} on the three \( (k'+1) \)-colorings and returns the best of the three resulting \( k \)-colorings. The recursion ends when \( k'=k \), and to run \textsc{Triple-Color} out of the box, we call it on the unique \( 1 \)-coloring.

Evidently the coloring returned by \textsc{Triple-Color} is the best of \( 3^{k-1} \) random \( k \)-colorings, each of which has the same marginal distribution as an output of \textsc{Full-Color}, with various levels of conditional independence between them. By sacrificing full independence, we obtain a slight speedup compared to simply calling \textsc{Full-Color} \( 3^{k-1} \) times, in that \textsc{Triple-Color} only calls \textsc{Discrete-Color} \( (1/2)(3^k-3) \) times, while the latter would require calling \textsc{Discrete-Color} a whole \( (k-1)3^{k-1} \) times.

The experiment of Figure \ref{erdos_experiment_1} on \textsc{Mod-GCN} and \textsc{Full-GCN} was again repeated for \textsc{Triple-Color}, and the results are shown in red in Figure \ref{erdos_experiment_1}. We see that \textsc{Triple-Color} actually outperforms both of our GCN algorithms by a sizable amount. We also see that the pattern of average loss being linear in \( n \) for sufficiently large \( n \) continues for \textsc{Triple-Color}. Again, it would be interesting to know whether this linear trend continues for larger \( n \). In Figure \ref{larger_experiment}, we see that the trend consistently underestimates the true mean for \( n=500 \) and especially \( n=1000 \), though it is usually not too far off unless the slope for \( n\in\{110,120,\dots,200\} \) was close to zero to begin with. We then see that the trend becomes even less accurate for \( n=5000 \) and \( n=10000 \), though it is still usually only off by about a factor of \( 2 \), except in the cases where it already failed for \( n=500 \) and \( n=1000 \). It is worth noting that even though \textsc{Triple-Color} outperforms \textsc{Full-GCN} for \( n \) up to \( 200 \), it is strongly outperformed by \textsc{Full-GCN} for \( n=1000 \), making \textsc{Full-GCN} our best algorithm for graphs of this order. Finally, the experiment of \ref{casestudy_experiment} on \textsc{Mod-GCN} and \textsc{Full-GCN} was repeated for \textsc{Triple-Color}, and the results are recorded in new columns of \ref{casestudy_experiment}. We see that \textsc{Triple-Color} continues to outperform our GCN algorithms in these test cases, and it even outperforms \textsc{PI-SAGE}, especially on the large Pubmed graph. The most prominent factor in \textsc{Triple-Color}'s runtime is of course the factor of \( 3^k \). Despite this large factor, all \textsc{Triple-Color} colorings represented in the top half of Figure \ref{casestudy_experiment} were produced in under \( 3 \) minutes per coloring on a single laptop, with the most expensive case being queen13-13 and with most other cases taking much less time.

\begin{figure}
\centering
\includegraphics[width=0.8\textwidth]{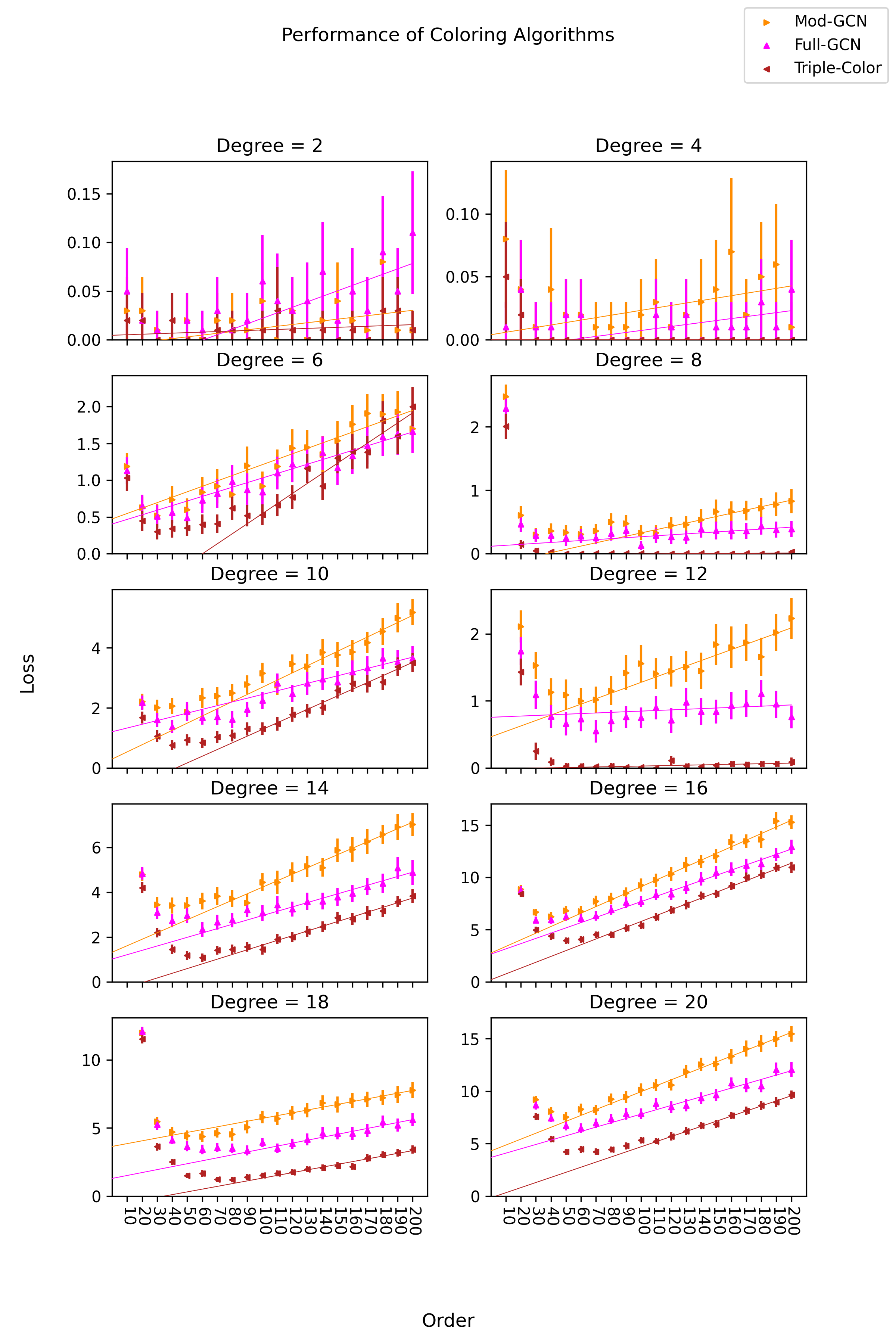}
\begin{adjustwidth}{-0.08in}{-0.08in}
\caption{Average loss of hard colorings produced by \textsc{Mod-GCN}, \textsc{Full-GCN}, and \textsc{Triple-Color} on \( 100 \) Erd\H os-R\'enyi graphs. Each error bar represents an approximate 95\% confidence interval for the true mean. Linear regressions based on the data for \( n\in\{110,120,\dots,200\} \) are shown. Every point has an error bar, but some error bars are so small that they are obscured by the point.}
\label{erdos_experiment_2}
\end{adjustwidth}
\end{figure}

\begin{figure}
\centering
\includegraphics[width=0.8\textwidth]{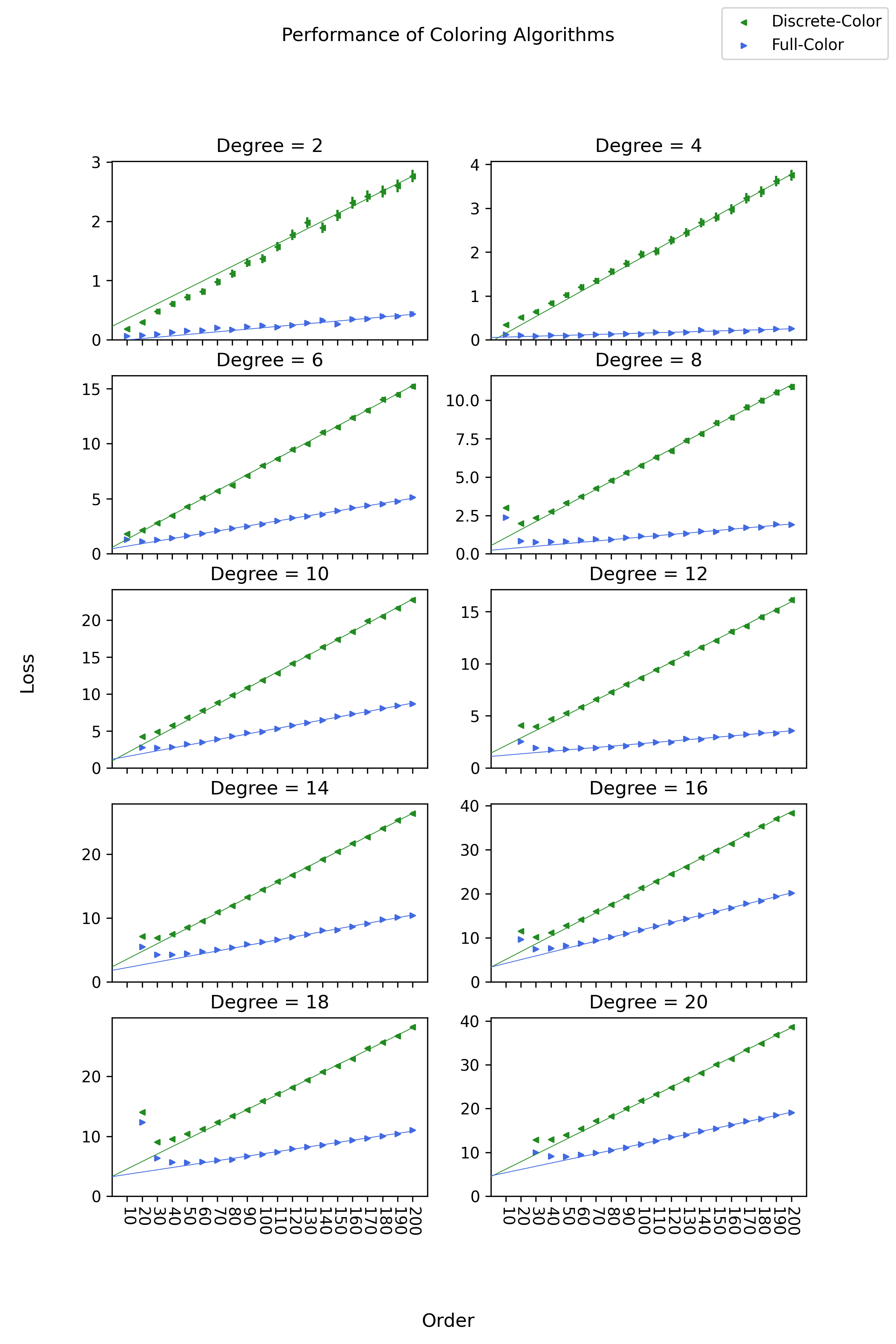}
\begin{adjustwidth}{-0.03in}{-0.03in}
\caption{Average loss of hard colorings produced by \textsc{Discrete-Color} and \textsc{Full-Color} on \( 1000 \) Erd\H os-R\'enyi graphs. Each error bar represents an approximate 95\% confidence interval for the true mean. Linear regressions based on the data for \( n\in\{110,120,\dots,200\} \) are shown. Every point has an error bar, but most error bars are so small that they are obscured by the point.}
\label{erdos_experiment_1}
\end{adjustwidth}
\end{figure}

\begin{figure}
\fontsize{11}{11}\selectfont
\centering
\[ \begin{array}{c|cc|cc}
\text{Algorithm, Degree}&\text{Prediction}&\text{Actual (}n=500\text{)}&\text{Prediction}&\text{Actual (}n=1000\text{)}\\\hline
\text{\textsc{Mod-GCN} (}d=2\text{)}&0.08&0.17\pm0.08&0.18&0.34\pm0.11\\
\text{\textsc{Mod-GCN} (}d=4\text{)}&0.10&0.24\pm0.09&0.20&0.34\pm0.11\\
\text{\textsc{Mod-GCN} (}d=6\text{)}&4.16&6.59\pm0.54&7.85&13.42\pm0.77\\
\text{\textsc{Mod-GCN} (}d=8\text{)}&2.40&2.92\pm0.33&4.98&5.72\pm0.52\\
\text{\textsc{Mod-GCN} (}d=10\text{)}&12.26&14.31\pm0.72&24.23&27.29\pm1.00\\
\text{\textsc{Mod-GCN} (}d=12\text{)}&4.53&6.43\pm0.55&8.59&11.81\pm0.84\\
\text{\textsc{Mod-GCN} (}d=14\text{)}&15.81&18.14\pm1.02&30.29&33.80\pm1.30\\
\text{\textsc{Mod-GCN} (}d=16\text{)}&34.52&38.45\pm1.16&66.28&71.87\pm1.91\\
\text{\textsc{Mod-GCN} (}d=18\text{)}&13.92&20.11\pm0.97&24.20&36.28\pm1.36\\
\text{\textsc{Mod-GCN} (}d=20\text{)}&32.56&36.97\pm1.37&60.81&66.19\pm1.59\\\hline
\text{\textsc{\textsc{Full-GCN}} (}d=2\text{)}&0.25&0.07\pm0.05&0.53&\pmb{0.17\pm0.08}\\
\text{\textsc{Full-GCN} (}d=4\text{)}&0.07&0.03\pm0.03&0.15&0.05\pm0.04\\
\text{\textsc{Full-GCN} (}d=6\text{)}&3.53&\pmb{3.81\pm0.40}&6.66&\pmb{7.18\pm0.59}\\
\text{\textsc{Full-GCN} (}d=8\text{)}&0.87&0.63\pm0.17&1.62&\pmb{1.45\pm0.28}\\
\text{\textsc{Full-GCN} (}d=10\text{)}&7.42&\pmb{9.33\pm0.71}&13.63&\pmb{16.48\pm1.02}\\
\text{\textsc{Full-GCN} (}d=12\text{)}&1.22&1.80\pm0.32&1.68&\pmb{3.81\pm0.44}\\
\text{\textsc{Full-GCN} (}d=14\text{)}&10.70&\pmb{9.86\pm0.62}&20.38&\pmb{20.35\pm1.09}\\
\text{\textsc{Full-GCN} (}d=16\text{)}&27.78&\pmb{28.74\pm1.07}&52.89&\pmb{54.24\pm1.38}\\
\text{\textsc{Full-GCN} (}d=18\text{)}&12.09&\pmb{10.47\pm0.65}&22.89&\pmb{20.44\pm0.98}\\
\text{\textsc{Full-GCN} (}d=20\text{)}&24.36&\pmb{25.55\pm1.06}&45.02&\pmb{46.81\pm1.38}\\\hline
\text{\textsc{Triple-Color} (}d=2\text{)}&0.03&\pmb{0.04\pm0.04}&0.06&0.39\pm0.11\\
\text{\textsc{Triple-Color} (}d=4\text{)}&0.00&\pmb{0.00\pm0.00}&0.00&\pmb{0.00\pm0.00}\\
\text{\textsc{Triple-Color} (}d=6\text{)}&6.02&6.72\pm0.43&12.87&15.94\pm0.66\\
\text{\textsc{Triple-Color} (}d=8\text{)}&0.05&\pmb{0.54\pm0.15}&0.12&2.85\pm0.28\\
\text{\textsc{Triple-Color} (}d=10\text{)}&10.21&11.69\pm0.51&21.38&27.56\pm0.81\\
\text{\textsc{Triple-Color} (}d=12\text{)}&0.20&\pmb{1.54\pm0.21}&0.42&5.77\pm0.33\\
\text{\textsc{Triple-Color} (}d=14\text{)}&10.02&12.30\pm0.55&20.50&30.77\pm0.76\\
\text{\textsc{Triple-Color} (}d=16\text{)}&28.09&32.02\pm0.67&55.97&69.09\pm1.05\\
\text{\textsc{Triple-Color} (}d=18\text{)}&9.39&11.41\pm0.50&19.49&27.68\pm0.67\\
\text{\textsc{Triple-Color} (}d=20\text{)}&24.16&26.98\pm0.64&48.49&60.39\pm0.87\\
\end{array}\]
\[ \begin{array}{c|cc|cc}
\text{Algorithm, Degree}&\text{Prediction}&\text{Actual (}n=5000\text{)}&\text{Prediction}&\text{Actual (}n=10000\text{)}\\\hline
\text{\textsc{Triple-Color} (}d=2\text{)}&0.28&5.77\pm0.36&0.55&13.87\pm0.53\\
\text{\textsc{Triple-Color} (}d=4\text{)}&0.00&1.56\pm0.17&0.00&4.71\pm0.30\\
\text{\textsc{Triple-Color} (}d=6\text{)}&67.63&99.93\pm1.39&136.09&209.93\pm2.28\\
\text{\textsc{Triple-Color} (}d=8\text{)}&0.65&26.26\pm0.66&1.32&59.87\pm1.07\\
\text{\textsc{Triple-Color} (}d=10\text{)}&110.69&166.24\pm1.90&222.32&344.39\pm2.32\\
\text{\textsc{Triple-Color} (}d=12\text{)}&2.17&49.89\pm0.87&4.35&112.68\pm1.28\\
\text{\textsc{Triple-Color} (}d=14\text{)}&104.28&184.29\pm1.69&209.01&385.97\pm2.47\\
\text{\textsc{Triple-Color} (}d=16\text{)}&278.98&391.54\pm2.37&557.73&806.72\pm3.28\\
\text{\textsc{Triple-Color} (}d=18\text{)}&100.21&178.27\pm1.70&201.21&374.46\pm2.32\\
\text{\textsc{Triple-Color} (}d=20\text{)}&243.08&341.81\pm2.17&486.33&714.02\pm3.27\\
\end{array}\]
\caption{Predicted expected loss of \textsc{Mod-GCN}, \textsc{Full-GCN}, and \textsc{Triple-Color} for \( n\in\{500,1000,5000,10000\} \) based on linear regressions on data for \( n\in\{110,120,\dots,200\} \) compared to actual average loss. Values in bold are the best among the three algorithms.}
\label{larger_experiment}
\end{figure}

\begin{figure}
\fontsize{8}{8}\selectfont
\centering
\[ \begin{array}{c|cc|c|ccc|cc}
\text{graph}&\text{order}&\text{size}&k&\text{\textsc{PI-GCN} \cite{schuetz}}&\text{\textsc{Mod-GCN}}&\text{\textsc{Full-GCN}}&\text{\textsc{PI-SAGE} \cite{schuetz}}&\text{\textsc{Triple-Color}}\\\hline
\text{anna \cite{color}}&138&493&11&1&\pmb{0}&\pmb{0}&\pmb{0}&\pmb{0}\\
\text{jean \cite{color}}&77^*&254&10&\pmb{0}&\pmb{0}&\pmb{0}&\pmb{0}&\pmb{0}\\
\text{myciel5 \cite{color}}&47&236&6&\pmb{0}&\pmb{0}&\pmb{0}&\pmb{0}&\pmb{0}\\
\text{myciel6 \cite{color}}&95&755&7&\pmb{0}&\pmb{0}&\pmb{0}&\pmb{0}&\pmb{0}\\
\text{queen5-5 \cite{color}}&25&160&5&\pmb{0}&\pmb{0}&\pmb{0}&\pmb{0}&\pmb{0}\\
\text{queen6-6 \cite{color}}&36&290&7&\pmb{1}&\pmb{1}&\pmb{1}&\pmb{0}&\pmb{0}\\
\text{queen7-7 \cite{color}}&49&476&7&8&7&\pmb{6}&\pmb{0}&\pmb{0}\\
\text{queen8-8 \cite{color}}&64&728&9&6&5&\pmb{3}&1&\pmb{0}\\
\text{queen9-9 \cite{color}}&81&1056&10&13&9&\pmb{6}&\pmb{1}&\pmb{1}\\
\text{queen8-12 \cite{color}}&96&1368&12&10&6&\pmb{2}&\pmb{0}&\pmb{0}\\
\text{queen11-11 \cite{color}}&121&1980&11&37&30&\pmb{25}&17&\pmb{15}\\
\text{queen13-13 \cite{color}}&169&3328&13&61&49&\pmb{34}&26&\pmb{23}\\
\text{cora \cite{cora}}&2708&5278^*&5&\pmb{1}&\text{NA}^{**}&\text{NA}^{**}&\pmb{0}&\pmb{0}\\
\text{citeseer \cite{citeseer}}&3327&4552^*&6&\pmb{1}&\text{NA}^{**}&\text{NA}^{**}&\pmb{0}&\pmb{0}\\
\text{pubmed \cite{pubmed}}&19717&44324^*&8&\pmb{13}&\text{NA}^{**}&\text{NA}^{**}&17&\pmb{0}
\end{array} \]
\[ \begin{array}{c|cc|cc|cc}
\text{graph}&\text{order}&\text{size}&\chi_\text{\textsc{Mod-GCN}}&\chi_\text{\textsc{Full-GCN}}&\chi_\text{\textsc{PI-SAGE}}\text{ \cite{schuetz}}&\chi_\text{\textsc{Triple-Color}}\\\hline
\text{anna \cite{color}}&138&493&\pmb{11}&\pmb{11}&\pmb{11}&\pmb{11}\\
\text{jean \cite{color}}&77^*&254&\pmb{10}&\pmb{10}&\pmb{10}&\pmb{10}\\
\text{myciel5 \cite{color}}&47&236&\pmb{6}&\pmb{6}&\pmb{6}&\pmb{6}\\
\text{myciel6 \cite{color}}&95&755&\pmb{7}&\pmb{7}&\pmb{7}&\pmb{7}\\
\text{queen5-5 \cite{color}}&25&160&\pmb{5}&\pmb{5}&\pmb{5}&\pmb{5}\\
\text{queen6-6 \cite{color}}&36&290&\pmb{8}&\pmb{8}&\pmb{7}&\pmb{7}\\
\text{queen7-7 \cite{color}}&49&476&9&\pmb{8}&\pmb{7}&\pmb{7}\\
\text{queen8-8 \cite{color}}&64&728&11&\pmb{10}&10&\pmb{9}\\
\text{queen9-9 \cite{color}}&81&1056&12&\pmb{11}&\pmb{11}&\pmb{11}\\
\text{queen8-12 \cite{color}}&96&1368&14&\pmb{13}&\pmb{12}&\pmb{12}\\
\text{queen11-11 \cite{color}}&121&1980&16&\pmb{14}&14&\pmb{13}\\
\text{queen13-13 \cite{color}}&169&3328&20&\pmb{17}&17&\pmb{16}\\
\text{cora \cite{cora}}&2708&5278^*&\text{NA}^{**}&\text{NA}^{**}&\pmb{5}&\pmb{5}\\
\text{citeseer \cite{citeseer}}&3327&4552^*&\text{NA}^{**}&\text{NA}^{**}&\pmb{6}&\pmb{6}\\
\text{pubmed \cite{pubmed}}&19717&44324^*&\text{NA}^{**}&\text{NA}^{**}&9&\pmb{8}
\end{array} \]
\caption{Performance of coloring algorithms on test graphs in \cite{schuetz}. In the first table, the final five columns record the loss of the best hard coloring of each graph found by each algorithm, while the column ``\( k \)'' records the number of colors used, which is also the true chromatic number of each graph. In the second table, the final four columns record the best upper bound on each graph's chromatic number found by each algorithm; that is, the minimum number of colors for which each algorithm found a proper coloring. According to \cite{qudit}, the values from \cite{schuetz} are obtained by running each algorithm \( 100 \) times and taking the best result, so the values from our algorithms are obtained in the same way. Values in bold are the best value in their group of columns. *Different from what was reported in \cite{schuetz}. For jean, we use a version with three vertices of degree \( 0 \) removed. For cora, citeseer, and pubmed, we convert the original directed graph to an undirected graph, turning double edges in the original graph into a single edge, hence the smaller edge count. **Not recorded due to runtime constraints.}
\label{casestudy_experiment}
\end{figure}

\section{Case Studies}\label{casestudy}

In this section, we test the \textsc{Mod-GCN} algorithm on several families of graphs with known upper bounds on the chromatic number to see if the algorithm is able to recover the same bound. A takeaway is that the algorithm can usually produce a proper coloring of these graphs using at most one more color than the upper bound, but it is also often hindered by their symmetry and mathematical structure in various ways.

\subsection{Chromatic Number \( 2 \)}

\textbf{Even Cycles:} Cycles of even order have chromatic number \( 2 \). Although the proper \( 2 \)-coloring is obvious, there are many poor local minima, such as in Figure \ref{casestudy_colorings}, left. We ran \textsc{Mod-GCN} \( 100 \) times on the cycle of order \( 200 \) and recorded the results in Figure \ref{case}. Despite how easy it is to color this graph intuitively, the algorithm struggles with it.

\textbf{Grid Graphs:} The \emph{grid graph} with arguments \( a_1,\dots,a_r \) is the graph with vertex set \( V(G)=\{1,\dots,a_1\}\times\{1,\dots,a_2\}\times\dots\times\{1,\dots,a_r\} \) in which two vertices are adjacent if and only if they differ by \( 1 \) in one coordinate and \( 0 \) in all other coordinates. In other words, it is the nearest neighbor graph of an \( r \)-dimensional cube lattice of length \( a_i \) in dimension \( i \). Taking \( r=1 \) gives the path graph of order \( a_1 \). Taking \( a_1=\dots=a_r=2 \) gives the \( r \)-dimensional hypercube graph. All grid graphs contain no odd cycles and thus have chromatic number \( 2 \).

For each \( r\in\{1,\dots,7\} \), we selected arguments \( a_1,\dots,a_r \) to make the order of the resulting grid graph as large as possible without exceeding \( 200 \), subject to the constraint that the largest argument differs by no more than \( 1 \) from the smallest argument. Thus, the arguments were \( (200) \), \( (14,14) \), \( (6,6,5) \), \( (4,4,4,3) \), \( (3,3,3,3,2) \), \( (3,3,2,2,2,2) \), and \( (3,2,2,2,2,2,2) \). We ran \textsc{Mod-GCN} \( 100 \) times on each resulting grid graph. For \( r=7 \), we repeated the experiment on the hypercube graph, in which the arguments are all \( 2 \). The results are recorded in Figure \ref{case}.

The results exhibit a very interesting pattern. For \( r=1,2 \), the results are comparable to those of the even cycle. For \( r\in\{3,\dots,7\} \), however, the algorithm finds a proper coloring at least 50\% of the time, with the success rate increasing in \( r \), but when it fails to find a proper coloring, it finds one in which the loss is much larger than \( 0 \), causing the algorithm's average loss to still be large. Additionally, for \( r\in\{3,\dots,7\} \), when the coloring is not proper, its loss seems to almost always take the same one or two precise values. This trend becomes especially apparent for \( r \) close to \( 7 \). For example, for \( r=7 \), a proper coloring is found \( 91 \) times, but in all nine remaining times, the loss is either \( 64 \) or \( 96 \).

Examining the colorings in these cases reveals that the repetition of \( 64 \) and \( 96 \) is no coincidence. In all colorings with loss \( 64 \), the set of monochromatic edges was precisely \[ \{\{(y,x_2,\dots,x_7),(y+1,x_2,\dots,x_7)\}:x_2,\dots,x_7\in\{1,2\}\} \] for some \( y\in\{1,2\} \). In all colorings with loss \( 96 \), it was precisely \[ \{\{(x_1,\dots,x_6,1),(x_1,\dots,x_6,2)\}:x_1\in\{1,2,3\},x_2,\dots,x_6\in\{1,2\}\} \] up to permuting coordinates \( 2 \) through \( 7 \). In the hypercube graph with \( r=7 \), the six colorings with loss \( 64 \) had an analogous form, and a similar pattern held for many of the other \( r \) values. The first of these colorings is the \( 7 \)-dimensional analog of the colorings shown in Figure \ref{badminimum}. Notice how this \( 7 \)-dimensional coloring is indeed a local minimum for both the original and modified loss functions, while in smaller dimensions, the modified loss function escapes the analogous coloring despite that coloring being a local minimum of the original loss function.

Increasing \( r \) increases the number of spatial symmetries of the grid graph. These results suggest that the algorithm responds in interesting ways to graphs with a high degree of symmetry. The symmetry often leads the algorithm to a proper coloring, but it can also cause the algorithm to get stuck at one of these highly symmetric colorings that are local minima with large loss.

\begin{figure}
\centering
\includegraphics[width=0.4\textwidth]{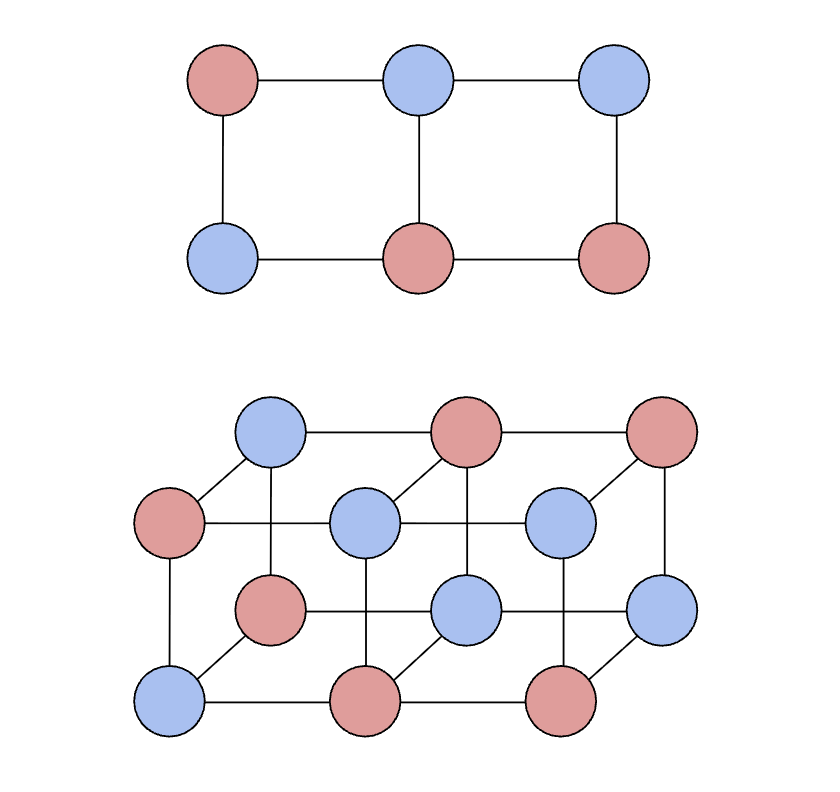}
\caption{Improper \( 2 \)-colorings of the grid graphs with arguments \( (3,2) \) and \( (3,2,2) \). Under the original loss function, both are local minima. Under the new loss function, in the top coloring, it is beneficial to switch e.g. the top right vertex to red, since this removes a monochromatic edge whose endpoints have degrees \( 3 \) and \( 2 \) and adds one whose endpoints have degrees \( 2 \) and \( 2 \). The bottom right vertex would then switch to blue, producing a proper coloring. When \( p=3 \), in the bottom coloring, it is not beneficial to switch the top right vertex to blue, since this removes one monochromatic edge with endpoint degrees \( (4,3) \) and adds two with endpoint degrees \( (3,3) \), and \( 2(3^3+3^3)>4^3+3^3 \). However, when \( p=4 \) or larger, the color switch is beneficial, since \( 2(3^4+3^4)<4^4+3^4 \). The other three vertices on the right would then switch colors too, producing a proper coloring. For the first coloring described in the section on grid graphs, the analogous color switch would remove a monochromatic edge with endpoint degrees \( (8,7) \) and add six with endpoint degrees \( (7,7) \). Therefore, the switch is beneficial if and only if \( p\ge18 \), and thus when \( p=3 \), the coloring is a local minimum.}
\label{badminimum}
\end{figure}

\textbf{Hexagonal Lattice Graphs:} A \emph{hexagonal lattice graph} has its nodes and edges on the regular hexagonal tiling of the plane (Figure \ref{casestudy_colorings}, top right). These graphs have no odd cycles and thus have chromatic number \( 2 \). We ran \textsc{Mod-GCN} 100 times on the hexagonal lattice graph with \( 9 \) rows and \( 9 \) columns of hexagons. The results are recorded in Figure \ref{case}. Like with the even cycle and path, the algorithm struggles with this graph even though the proper coloring is intuitive.

\subsection{Chromatic Number at Most \( 3 \)}

\textbf{Odd Cycles:} Cycles of odd order have chromatic number \( 3 \). Furthermore, no improper \( 3 \)-coloring is a local minimum, since any vertex that belongs to a monochromatic edge can simply switch to whichever color does not appear among its \( 2 \) neighbors. Therefore, the algorithm should easily find a proper coloring. Thankfully, it does. The results of running \textsc{Mod-GCN} \( 100 \) times on the cycle of order \( 199 \) are recorded in Figure \ref{case}.

\textbf{Triangular Lattice Graphs:} A \emph{triangular lattice graph} is defined analogously to hexagonal lattice graphs but for the regular triangular tiling of the plane (Figure \ref{casestudy_colorings}, bottom right). Since these graphs are filled with \( 3 \)-cycles, they are not \( 2 \)-colorable, though they do admit a proper \( 3 \)-coloring that is unique up to permuting the colors. The \( 3 \)-coloring is intuitive since it can be obtained by first coloring two adjacent vertices with opposite colors, then noting that every vertex's color from then on is forced due to the colors of its neighbors. We ran \textsc{Mod-GCN} 100 times on the triangular lattice graph with \( 19 \) rows and \( 18 \) columns of triangles; this is isomorphic to a \( 20\times10 \) grid graph with diagonal chords in each square that alternate orientation with each row. The results are recorded in Figure \ref{case}. Like with the even cycle and hexagonal lattice, the algorithm struggles to find the intuitive coloring.

\begin{figure}
\centering
\includegraphics[width=0.6\linewidth]{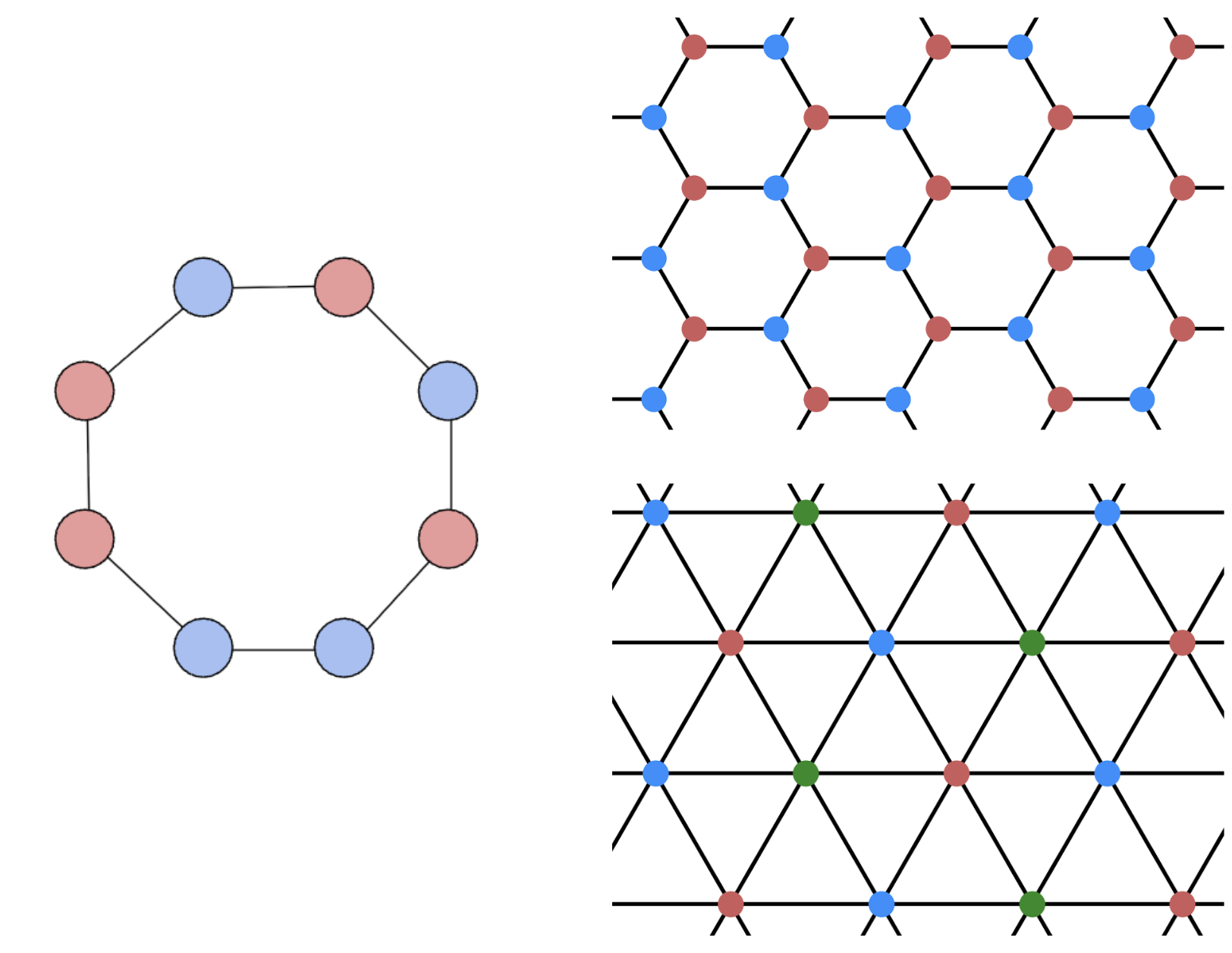}
\caption{Left: a poor local minimum when \( 2 \)-coloring the cycle of order \( 8 \). Right: the proper colorings of a hexagonal lattice graph and a triangular lattice graph.}
\label{casestudy_colorings}
\end{figure}

\textbf{\( 3 \)-regular Graphs:} If a graph \( G \) has maximum degree \( \Delta(G) \), then it is \( (\Delta(G)+1) \)-colorable, since every vertex can simply receive whichever color does not appear among its at most \( \Delta(G) \) neighbors. Brooks's theorem \cite{brooks} states that if \( G \) is connected and neither a complete graph nor an odd cycle, then it is in fact \( \Delta(G) \)-colorable, improving the previous sentence's bound by \( 1 \). If \( G \) is connected and not regular, then \( G \) has degeneracy less than \( \Delta(G) \), and therefore it is easy to find a proper \( \Delta(G) \)-coloring. However, if \( G \) is regular, then it is not as easy to find such a coloring. It would be interesting to know if our algorithm can find \( r \)-colorings of \( r \)-regular graphs. We ran \textsc{Mod-GCN} on \( 100 \) random \( 3 \)-regular graphs of order \( 200 \) (generated using NetworkX's \texttt{random\_regular\_graph} function, like all regular graphs in this section) and recorded the results in Figure \ref{case}. The algorithm performs almost perfectly.

\subsection{Chromatic Number at Most \( 4 \)}\label{4chromatic}

\textbf{Planar Graphs:} A graph is \emph{planar} if its vertices and edges can be drawn in the plane without edges overlapping. By Wagner's Theorem, a graph is planar if and only if it contains neither \( K_5 \) nor \( K_{3,3} \) as a minor \cite{wagner}. One of the most famous results of graph theory is that every planar graph is \( 4 \)-colorable \cite{fourcolor}. Therefore, it would be interesting to know if our algorithm can find \( 4 \)-colorings of planar graphs.

A planar graph of order \( n \) has size at most \( 3n-6 \) and hence average degree at most \( 6-12/n \). A graph is \emph{maximally planar} if it is planar with order \( n \) and size \( 3n-6 \). Every planar graph is contained in a maximally planar graph and can therefore be completed to a maximally planar graph. Since maximally planar graphs have average degree tending to \( 6 \) as \( n\rightarrow\infty \), it follows that \( k_d+1=4 \) for these graphs, the same number of colors as in the Four Color Theorem, even though Erd\H os-R\'enyi graphs have very different structure from planar graphs. 

We ran \textsc{Mod-GCN} on \( 100 \) random maximally planar graphs of order \( 200 \) (see Appendix \ref{random_planar} for details on the generation process), and the results are recorded in Figure \ref{case}. The algorithm sadly does not usually find a proper \( 4 \)-coloring, though it does almost always find a proper \( 5 \)-coloring. The algorithm performs worse on these graphs than it does on Erd\H os-R\'enyi graphs of the same order and average degree. One possible explanation for this is that the degree sequence in a maximally planar graph looks very different from that of an Erd\H os-R\'enyi graph of the same order and average degree. For example, Figure \ref{hist} is a histogram of the degree sequence in one of our maximally planar graphs.

\begin{figure}
\centering
\includegraphics[width=0.6\textwidth]{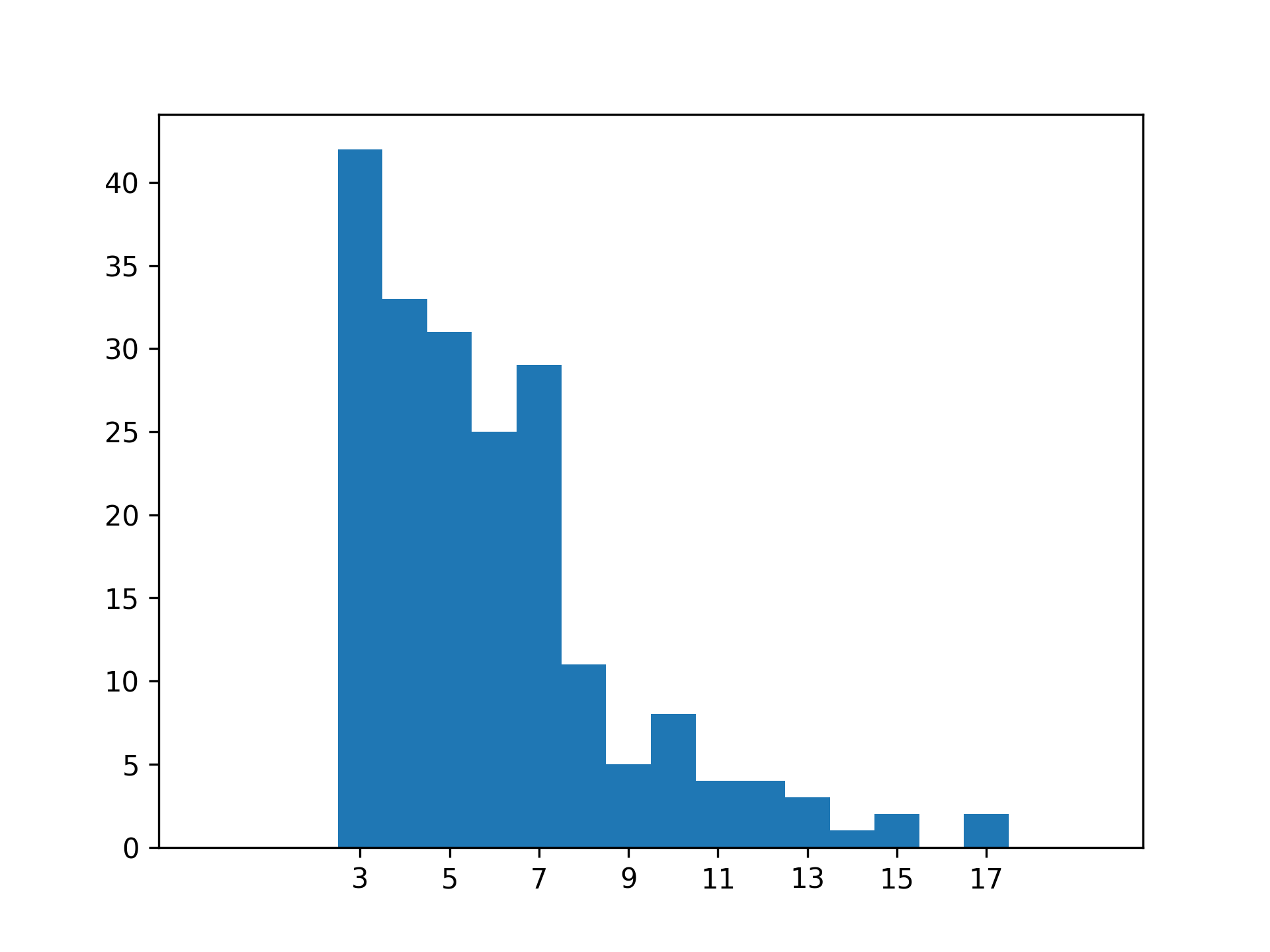}
\caption{Histogram of the degree sequence in a random maximally planar graph.}
\label{hist}
\end{figure}

We see that there are many outliers with large degree. One might expect that it would be difficult to find a color for a vertex with \( 17 \) neighbors when only \( 4 \) colors are allowed, and that this might lead to extra monochromatic edges. To diagnose whether this is the root of the issue, we generated \( 100 \) new random graphs of order \( 200 \), not necessarily planar, with degree sequences almost identical to those of our maximally planar graphs (see Appendix \ref{random_planar} for details on the generation process). We then ran \textsc{Mod-GCN} on these ``replica'' graphs and recorded the results in Figure \ref{case}. The algorithm's performance on the replicas aligns with its performance on Erd\H os-R\'enyi graphs, suggesting that the root of the issue with planar graphs is not merely their degree sequence but rather their specific geometric structure.

\textbf{\( 4 \)-regular Graphs:} The experiment with \( 3 \)-regular graphs was repeated for \( 4 \)-regular graphs. The results are recorded in Figure \ref{case}. The algorithm performs perfectly, which aligns with its performance on Erd\H os-R\'enyi graphs of the same order and average degree.

\subsection{Larger Chromatic Number}\label{largerchroma}

\textbf{\( 5 \)-regular and \( 6 \)-regular Graphs:} The experiment with regular graphs was repeated for \( 5 \)-regular graphs and \( 6 \)-regular graphs. This time, the number of colors suggested by \( k_d+1 \) is smaller than the bound on the chromatic number given by Brooks's theorem, so we used \( k_d+1=4 \) colors in the algorithm. The results are recorded in Figure \ref{case}. The algorithm performed extremely well with \( 4 \) colors, and it performed perfectly when we increased the number of colors to \( 5 \). Thus, in both cases, the algorithm was perfectly able to find proper \( r \)-colorings of \( r \)-regular graphs. For \( 6 \)-regular graphs with \( 4 \) colors, the algorithm's performance was significantly better than its performance on Erd\H os-R\'enyi graphs of the same order and average degree. This suggests that either the uniform degree sequence or some other resulting property of regular graphs makes it easier for the algorithm to find a proper coloring.

\textbf{Larger Regular Graphs:} For all \( r\in\{7,\dots,195\} \), the algorithm produced a proper \( r \)-coloring all \( 100 \) times when running \textsc{Mod-GCN} on \( 100 \) randomly generated \( r \)-regular graphs of order \( 200 \). This is not too surprising, since the number of colors suggested by \( k_d+1 \) is much smaller than \( r \) for all reasonably large \( r \). For \( r\in\{196,\dots,198\} \), the algorithm did not always produce a proper \( r \)-coloring. This aligns with our observation in Section \ref{oversmooth_sec} that the algorithm struggles with extremely dense graphs. Note that any \( 199 \)-regular graph of order \( 200 \) is \( K_{200} \), in which case Brooks's Theorem fails and the graph is not \( 199 \)-colorable. Also note that any \( 2 \)-regular graph is a disjoint union of cycles, in which case Brooks's Theorem can also fail. Hence \( r=199 \) and \( r=2 \) are excluded from the experiment, though the results for the even cycle suggest that the algorithm would struggle to find a proper \( r \)-coloring when \( r=2 \) even if all cycles were even.

\begin{figure}
\fontsize{10}{10}\selectfont
\centering
\[ \begin{array}{c|ccccc|c|c}
\text{Graph}&\chi_0&k&k_d+1&\chi&\chi^*&\text{Mean}&\text{Best}\\\hline
\text{Even Cycle}&2&2&3&3&3&13.62\pm0.60&(6,2),(8,4),(10,12),(12,23),(14,25)\\
\text{Grid (}r=1\text{)}&2&2&3&3&3&13.84\pm0.51&(7,1),(8,1),(9,4),(10,3),(11,9)\\
\text{Grid (}r=2\text{)}&2&2&4&2&4&14.55\pm1.47&(0,14),(8,2),(9,3),(10,6),(11,1)\\
\text{Grid (}r=3\text{)}&2&2&4&2&4&16.35\pm3.44&(0,50),(20,3),(24,6),(25,6),(30,8)\\
\text{Grid (}r=4\text{)}&2&2&4&2&4&10.88\pm4.45&(0,80),(48,12),(60,6),(72,1),(80,1)\\
\text{Grid (}r=5\text{)}&2&2&4&2&3&6.66\pm3.64&(0,88),(54,11),(72,1)\\
\text{Grid (}r=6\text{)}&2&2&5&2&3&6.28\pm3.73&(0,89),(48,8),(72,1),(84,1),(88,1)\\
\text{Grid (}r=7\text{)}&2&2&5&2&3&7.04\pm4.60&(0,91),(64,5),(96,4)\\
\text{Hypercube (}r=7\text{)}&2&2&5&2&3&4.80\pm3.57&(0,93),(64,6),(96,1)\\
\text{Hexagonal Lattice}&2&2&4&3&4&18.11\pm1.09&(6,2),(7,4),(9,1),(10,1),(11,3)\\\hline
\text{Odd Cycle}&3&3&3&3&3&0.00\pm0.00&(0,100)\\
\text{Triangular Lattice}&3&3&4&4&5&22.24\pm1.19&(9,1),(10,1),(11,2),(12,3),(13,1)\\
3\text{-regular}&3^*&3&4&3&4&0.02\pm0.03&(0,98),(1,2)\\\hline
\text{Planar}&4^*&4&4&4&6&6.49\pm0.55&(0,1),(1,1),(2,3),(3,5),(4,13)\\
\text{Planar}&4^*&5&4&4&6&0.21\pm0.09&(0,81),(1,17),(2,2)\\
\text{Planar Replica}&\text{NA}&4&4&4&5&1.82\pm0.28&(0,19),(1,22),(2,36),(3,12),(4,4)\\
4\text{-regular}&4^*&4&4&4&4&0.00\pm0.00&(0,100)\\\hline
5\text{-regular}&5^*&4&4&4&5&0.08\pm0.05&(0,92),(1,8)\\\hline
6\text{-regular}&6^*&4&4&4&5&0.90\pm0.19&(0,40),(1,37),(2,18),(3,3),(4,2)\\
\end{array}
\]
\caption{Results obtained from testing \textsc{Mod-GCN} on specific graphs or families of graphs. The column ``\( \chi_0 \)'' gives the true chromatic number of each graph. Items with a * in this column are not necessarily the exact chromatic number, but rather an optimal upper bound on the chromatic number. The column ``\( k \)'' gives the number of colors used in the test. The column ``\( k_d+1 \)'' gives the predicted upper bound from \cite{chi} on the chromatic number of an Erd\H os-R\'enyi graph with the same order and average degree. The column ``\( \chi \)'' gives the smallest number of colors for which the algorithm found at least one proper coloring, thus making \( \chi \) an upper bound on the chromatic number. The column ``\( \chi^* \)'' gives the smallest number of colors for which the algorithm found a proper coloring all \( 100 \) times. The column ``Mean'' gives the algorithm's average loss and an approximate 95\% confidence interval for the true mean. The column ``Best'' gives pairs \( (x,y) \), where \( x \) is one of the five best loss values obtained by the algorithm and \( y \) is the number of times the algorithm found a coloring with that loss value. The entire experiment was repeated using \textsc{Full-GCN}. This led to improvements of each average loss, but it did not change any of the \( \chi \) or \( \chi^* \) values.}
\label{case}
\end{figure}

\subsection{Complete Graphs}\label{oversmooth_sec}

The complete graph \( K_n \) on \( n \) vertices has chromatic number \( n \). No improper \( n \)-coloring of \( K_n \) or any graph of order \( n \) is a local minimum, since any such coloring has an unused color. Therefore, the algorithm should easily find a proper \( n \)-coloring of \( K_n \). However, this is not the case. Message-passing GNNs with large depth or where the graph is very dense are known to suffer from an issue called \emph{oversmoothing} \cite{oversmooth}, in which the overwhelming amount of message passing causes each vertex to learn the same final embedding. When producing an \( n \)-coloring of \( K_n \) for large \( n \), the algorithm tends to exhibit oversmoothing in that every vertex learns to have a uniform distribution over the \( n \) colors, resulting in a soft loss of \( \binom{n}{2}/n=(n-1)/2 \). The same issue occurs for graphs with density very close to \( 1 \), though it disappears quickly when the density is less than around \( 0.99 \). When the GNN depth is increased to \( 2 \), as in \cite{schuetz}, the same issue occurs even for graphs with much smaller density than \( K_n \). When the depth is increased to \( 2 \) and a nonzero dropout layer is included, as in \cite{schuetz}, the issue occurs for even less dense graphs.

To diagnose the effect of oversmoothing on the algorithm, we performed the following experiment. For each depth \( 1 \) and \( 2 \), each dropout rate \( 0 \) and \( 0.1 \), and each order in \( \{10,20,\dots,200\} \), we binary searched the values \( p\in\{0.10,0.11,\dots,1.00\} \) to find the smallest value \( p \) for which the oversmoothing issue occurred when using \textsc{Mod-GCN} on an Erd\H os-R\'enyi graph \( G(n,p) \). Note that due to randomness, there is not necessarily an exact threshold on \( p \) above which oversmoothing always occurs, but this experiment provides a good approximation for such a threshold. The results are shown in Figure \ref{oversmooth}. For depth \( 1 \), the issue only tends to occur in graphs with density very close to \( 1 \), but for depth \( 2 \), the issue occurs in much less dense graphs.

\begin{figure}
\centering
\includegraphics[width=0.65\textwidth]{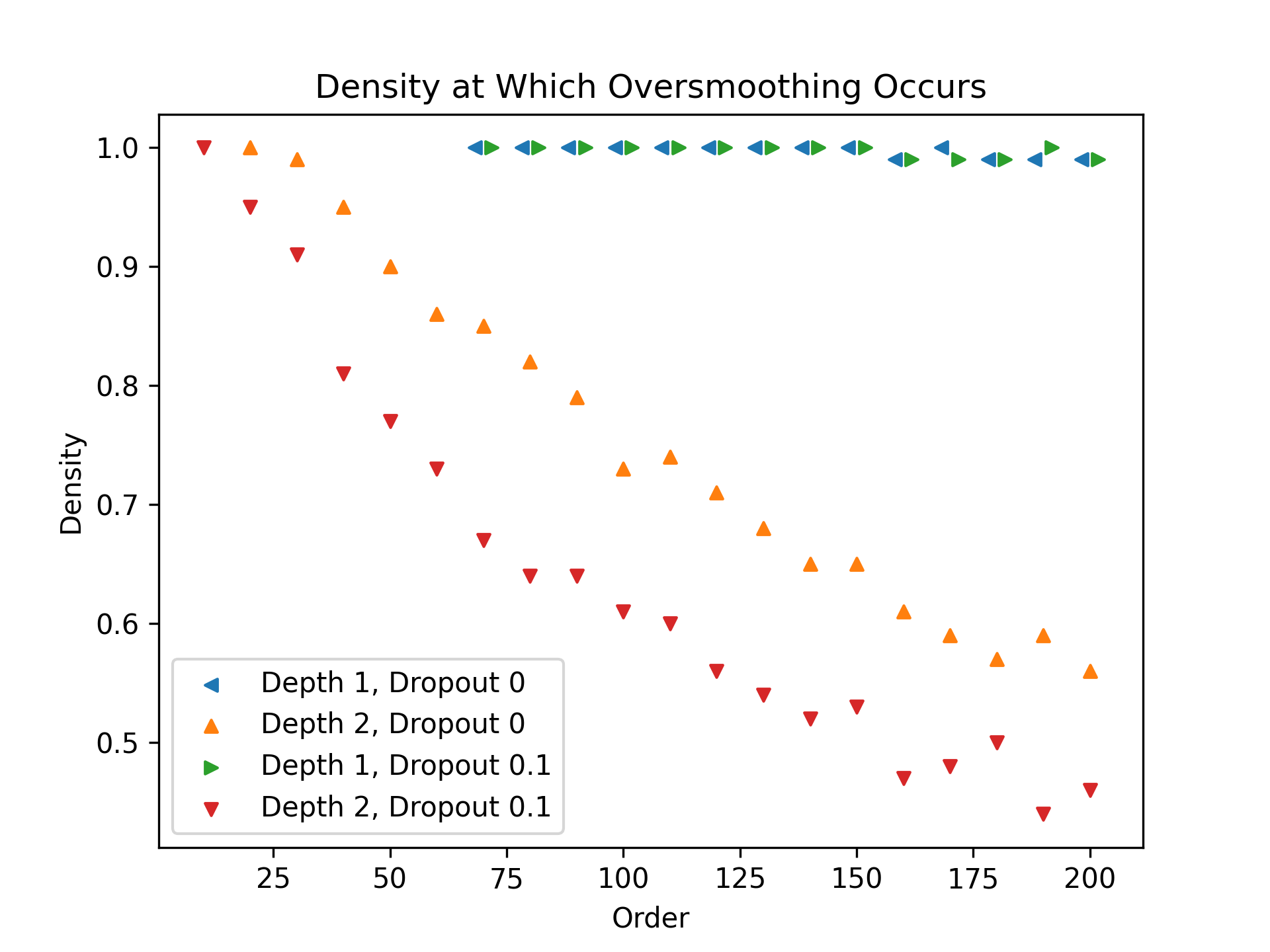}
\caption{Density at which oversmoothing occurs when \( n \)-coloring Erd\H os-R\'enyi graphs for different orders \( n \), depths, and dropout rates. For depth \( 1 \), noise is added to the \( x \)-coordinates to visually distinguish the data. A data point being absent from the plot indicates that oversmoothing never occurred.}
\label{oversmooth}
\end{figure}

There are some potential remedies to the oversmoothing issue. When the GNN depth was decreased to \( 0 \), oversmoothing never occurred regardless of the order, dropout rate, and density. Additionally, the paper \cite{unitary} develops a new type of message-passing layer called \emph{unitary convolutions} that are specifically designed to avoid oversmoothing and do so with provable guarantees. When our GCN layers were replaced with these unitary convolutions, oversmoothing never occurred, even for complete graphs at depth \( 2 \) and dropout \( 0.1 \). However, unitary convolutions performed significantly worse than default \textsc{Mod-GCN} on the experiments from Section \ref{modgcn}. Therefore, for general graph coloring tasks, it seems that unitary convolutions only boost performance in specific instances in which avoiding oversmoothing is crucial.

\vspace{-1.5ex}
\section{Conclusions}\label{conclusion}
\vspace{-0.5ex}

In this study, we asked whether incorporating known structure or coloring heuristics can boost the performance of GNN-based approximate \( k \)-coloring algorithms such as \textsc{PI-GCN}~\cite{schuetz}. We found that the algorithm may be improved by modifying the initial vertex features to be orthogonal and modifying the loss function to penalize monochromatic edges more heavily when their endpoints have higher degree. Furthermore, we found that the trick of having a method recursively call itself to produce a \( (k-1) \)-coloring for a warm start is very beneficial in local search methods, and we created powerful algorithms by applying this trick both to the GNN method and to a lightweight greedy algorithm. The latter outperformed \textsc{PI-SAGE}, the best method of \cite{schuetz}, on small-scale test instances. While the former did not outperform \textsc{PI-SAGE} on these test cases, it exhibited superior performance on large inputs. We hope that the modifications proposed here are useful for designing other GNN-based methods for combinatorial optimization problems, especially approximate \( k \)-coloring. Finally, we found that our GNN-based algorithm recovers approximations to mathematical upper bounds on chromatic numbers relatively well, often producing a bound that is at most one more than the mathematical bound, though it also runs into issues with highly structured graphs such as poor local minima in highly symmetric graphs and oversmoothing in complete graphs.

Though our algorithms perform well in many cases, a number of limitations remain that merit further study. While performing well on the coloring benchmark, our methods only recover upper bounds, rather than the true values, for the chromatic numbers of mathematically interesting families of graphs. For example, they are unable to find intuitive proper colorings such as \( 2 \)-colorings of \( 2 \)-colorable graphs, and though they are usually able to produce an upper bound of \( 5 \) on the chromatic number of a maximally planar graph, they usually fail to produce the optimal upper bound of \( 4 \). Our best algorithm, \textsc{Triple-Color}, though outperforming \textsc{PI-SAGE}, is still outperformed by classical methods, such as that of \cite{qudit}. Additionally, though the performance of \textsc{Full-GCN} scales well as the order of the graph increases to about \( 1000 \), it is quite slow even to run \textsc{Mod-GCN} for graphs this large, much less \textsc{Full-GCN}.

A natural future direction is to continue finding new modifications to the GNN method that further improve its performance. Another is to test other GNN architectures beyond message-passing, such as graph transformers. Finally, it may be interesting to continue studying how the performance of each method scales as the order of the graph increases, and to find a new method for which the expected loss grows linearly with a small slope for \( n \) as large as possible.

\begin{adjustwidth}{-0.04in}{-0.04in}
\section*{Acknowledgements}
KV was supported by the Harvard College Research Program. MW acknowledges support from an Alfred P. Sloan Fellowship in Mathematics and DARPA under agreement no. HR0011-25-3-0205.

\end{adjustwidth}

\appendix

\section{Extended Background}\label{extended_background}

In addition to GCNs, we test three state of the art GNN architectures known as GIN \cite{gin}, GAT \cite{gat}, and GraphSAGE \cite{sage}. GINs are designed to be the most powerful GNN for distinguishing non-isomorphic graphs. In particular, they are at least as powerful as the Weisfeiler-Lehman test \cite{weisfeiler,gin}. In a GIN, the update rule is given by \[ \pmb{x}_i^{t+1}=h_{\pmb{W}_t}\left((1+\epsilon_t)\pmb{x}_i^t+\sum_{j\in\mathcal{N}(i)}\pmb{x}_j^t\right), \] where \( h_{\pmb{W}_t} \) is a multilayer perceptron with learnable weights \( \pmb{W}_t \), and \( \epsilon_t \) is a hyperparameter by default, though it may be made a learnable parameter. The initial value of \( \epsilon_t \) is \( 0 \) by default. GATs use a self-attention mechanism to compute coefficients \( \alpha_{ij}^t \) that determine how heavily vertex \( j \)'s current representation is weighted in determining vertex \( i \)'s next representation at time \( t \). In a GAT, the update rule is given by \[ \pmb{x}_i^{t+1}=\sigma_t\left(\sum_{j\in\mathcal{N}(i)\cup\{i\}}\alpha_{ij}\pmb{W}_t\pmb{x}_i^t\right), \] where \( \alpha_{ij} \) is computed as \[ \alpha_{ij}=\frac{\exp\left(\text{LeakyReLU}\left(\pmb{a}_t^\top[\pmb{W}_t\pmb{x}_i^t\lVert\pmb{W}_t\pmb{x}_j^t]\right)\right)}{\sum_{k\in\mathcal{N}(i)\cup\{i\}}\exp\left(\text{LeakyReLU}\left(\pmb{a}_t^\top[\pmb{W}_t\pmb{x}_i^t\lVert\pmb{W}_t\pmb{x}_k^t]\right)\right)}, \] where ``\( \lVert \)'' denotes concatenation. Here \( \pmb{W}_t \) and \( \pmb{a}_t \) are learnable weights, and \( \sigma_t \) is an activation function, ReLU by default. If the graph is bipartite, then instead of the above, there are two weights matrices \( \pmb{W}_1^t,\pmb{W}_2^t \), one for \( \pmb{x}_i^t \) and one for \( \pmb{x}_j^t \) when \( j\in\mathcal{N}(i) \). GraphSAGE is designed so that it may be trained on a subgraph of a larger graph and then generalize well to previously unseen nodes in the larger graph. In GraphSAGE, the update rule is given by \[ \pmb{x}_i^{t+1}=\sigma_t\left(\pmb{W}_1^t\pmb{x}_i^t+\pmb{W}_2^t\text{mean}_{j\in\mathcal{N}(i)}\pmb{x}_j^t\right), \] where \( \pmb{W}_1^t,\pmb{W}_2^t \) are learnable weights and \( \sigma_t \) is an activation function, ReLU by default.

\section{Unsuccessful Modifications for \textsc{Mod-GCN}}\label{othermods}

\subsection{Potential Modifications}

When deciding on modifications to include in \textsc{Mod-GCN}, we also considered the following.

\begin{itemize}
\item\textbf{Encodings:} Graph learning tasks have been shown to benefit from incorporating geometric information about each node in the initial node embeddings \( \pmb{X} \). Examples of such \emph{encodings} include random walk transition probabilities~\cite{rw}, eigenvectors of the Graph Laplacian~\cite{lapeig}, discrete Ricci curvature~\cite{ricci}, and numerous other examples. Our experiments suggest that encodings do not lead to significant performance enhancements.
\item\textbf{Loss function:} A related idea to the new loss function presented in Section \ref{originalmods} is to use the loss function \[ \mathcal{L}(\pmb{P})=\textstyle{\frac{1}{2}}\displaystyle(\pmb{A}\times(\pmb{A}^2+\pmb{1}_n\pmb{1}_n^\top))\cdot(\pmb{P}\pmb{P}^\top), \] where ``\( \times \)'' denotes element-wise multiplication. This scales up the loss contribution of each monochromatic edge by one plus the number of triangles that the edge belongs to. Again, this leads to a larger penalty for monochromatic edges in denser parts of the graph. However, the loss function presented in the main text achieves the best performance.
\item\textbf{GNN layer type:} The authors of \cite{schuetz} use GCNs \cite{gcn} and GraphSAGE networks \cite{sage}. GINs \cite{gin} and GATs \cite{gat} are two other message-passing GNN architectures that have proved useful for a variety of applications. We tested whether using either GINs or GATs leads to performance improvements. We also tested whether using GraphSAGE networks without preventing the algorithm from converging leads to performance improvements. Our experiments answer both questions in the negative.
\item\textbf{GNN depth:} The authors of \cite{schuetz} use GNNs of depth \( 2 \), meaning there are two message-passing layers. We tested whether changing the depth to \( 4 \), \( 3 \), \( 1 \), or \( 0 \) (the latter meaning there is no GNN at all and we optimize directly over \( \pmb{Q}\in\mathbb{R}^{n\times k} \) as described in the introduction) leads to differences in performance. The GNN configuration from the main text, which has depth \( 1 \), performs best.
\item\textbf{Dropout rate:} It has been shown that randomly zeroing out some fraction of the weights during each forward pass while training a neural network can prevent overfitting, leading to improvements \cite{dropout}.  The authors of \cite{schuetz} include this dropout step in their GNNs, with the dropout rate tuned separately for each test graph. When changing the dropout rate, including \( 0 \) as a possibility, we failed to conclude that using nonzero dropout leads to performance improvements.
\item\textbf{Self-loops:} In a message-passing GNN, we say that \emph{self-loops} are included if vertices pass messages to themselves during message passing. The authors of \cite{schuetz} use no self-loops. Our experiments confirm that this is the best choice.
\end{itemize}

\subsection{Experimental Results}

In the default version of the algorithm, a GCN was used, the depth was \( 1 \), the dropout rate was \( 0 \), and self-loops were not used. The tables below list approximate 95\% confidence intervals for each modifications's true expected loss when coloring \( 100 \) Erd\H os-R\'enyi graphs of order \( n=200 \).

\begin{itemize}
\item\textbf{Encodings:}
\[ \begin{array}{c|ccc}&d=10&d=16&d=20\\\hline
\text{Default}&8.62\pm0.56&20.16\pm0.79&20.28\pm0.85\\
\text{Encodings}&\pmb{7.92\pm0.58}&\pmb{19.54\pm0.74}&\pmb{19.77\pm0.86}
\end{array} \]
\item\textbf{Loss function:}
\[ \begin{array}{c|ccc}&d=10&d=16&d=20\\\hline
\text{Default}&\pmb{8.62\pm0.56}&\pmb{20.16\pm0.79}&\pmb{20.28\pm0.85}\\
\text{Triangle}&8.77\pm0.57&20.64\pm0.91&20.79\pm0.88
\end{array} \]
\item\textbf{GNN layer type:} 
\[ \begin{array}{c|ccc}&d=10&d=16&d=20\\\hline
\text{Default}&\pmb{8.62\pm0.56}&\pmb{20.16\pm0.79}&\pmb{20.28\pm0.85}\\
\text{GIN}&24.78\pm14.78&31.95\pm1.02&33.25\pm1.00\\
\text{GAT}&30.76\pm1.37&59.16\pm2.40&65.14\pm2.70\\
\text{GraphSAGE}&15.46\pm0.73&31.08\pm0.98&34.53\pm0.98
\end{array} \]
\item\textbf{GNN depth:}
\[ \begin{array}{c|ccc}&d=10&d=16&d=20\\\hline
0&13.18\pm0.72&26.92\pm0.96&26.43\pm0.86\\
\text{Default}&\pmb{8.62\pm0.56}&\pmb{20.16\pm0.79}&\pmb{20.28\pm0.85}\\
2&9.30\pm0.60&22.38\pm0.85&24.37\pm0.78\\
3&16.02\pm0.82&39.2\pm1.22&45.24\pm1.39\\
4&28.43\pm1.22&60.34\pm1.74&73.03\pm2.50
\end{array} \]
\item\textbf{Dropout rate:} 
\[ \begin{array}{c|ccc}&d=10&d=16&d=20\\\hline
\text{Default}&\pmb{8.62\pm0.56}&20.16\pm0.79&20.28\pm0.85\\
0.1&8.73\pm0.58&\pmb{19.07\pm0.73}&\pmb{19.95\pm0.76}\\
0.2&8.68\pm0.60&19.55\pm0.77&21.39\pm0.80
\end{array} \]
\item\textbf{Self-loops:}
\[ \begin{array}{c|ccc}&d=10&d=16&d=20\\\hline
\text{Default}&\pmb{8.62\pm0.56}&\pmb{20.16\pm0.79}&\pmb{20.28\pm0.85}\\
\text{Loops}&10.27\pm0.59&22.87\pm0.80&23.64\pm0.84
\end{array} \]
\end{itemize}

For the initial embedding \( \pmb{X} \), the encodings do outperform the default, but not by a significant amount and not by as much as the orthogonal embeddings. For the loss function, the triangle loss function does not even perform as well as the default, though the difference is insignificant. For the GNN layer type, the default significantly outperforms all other choices for all three \( d \) values. For the GNN depth, the default outperforms all other choices for all three \( d \) values, with all of these differences being significant except for that between depth \( 1 \) and \( 2 \) for \( d=10 \). Our choice of maintaining depth \( 1 \) here departs from \cite{schuetz}, which used depth \( 2 \). It is also worth noting that the fact that depth \( 1 \) significantly outperforms depth \( 0 \) demonstrates the advantage of using GNNs rather than the baseline algorithm described in the introduction. For the dropout rate, the results indicate that there is a possibility of dropout \( 0.1 \) being beneficial compared to the default. However, there is no improvement for \( d=10 \), the improvement for \( d=20 \) is far from significant, and the improvement for \( d=16 \) is just barely significant. Our choice to maintain dropout \( 0 \) here departs from \cite{schuetz}, which used a nonzero dropout rate for each test graph. Regarding self-loops, the default significantly outperforms using self-loops for all three \( d \) values.

The hypothesis testing framework may seem inappropriate here since the consequences of a Type I error are essentially the same as those of a Type II error; either way, we have simply made the wrong algorithm design choice. However, we would like to prioritize simplicity in our method, meaning we would not like to add any extra steps unless we are confident that they lead to an improvement. Therefore, we err on the side of avoiding Type I errors. Nonetheless, since hypothesis testing is not extremely crucial here, we use the term ``statistically significant'' loosely to mean that the two corresponding confidence intervals do not overlap after scaling each by a factor of \( 1/\sqrt{2} \), which corresponds roughly but not exactly to a hypothesis test at level \( 0.05 \) assuming roughly equal standard errors.

When testing the modifications of \textsc{Mod-GCN} for stability, the results for each modification in the present section are as follows.

\begin{itemize}
\item\textbf{Encodings:}
\[ \begin{array}{c|ccc}&d=10&d=16&d=20\\\hline
\text{Default}&5.06\pm0.41&15.60\pm0.79&\pmb{16.11\pm0.77}\\
\text{Encodings}&\pmb{4.94\pm0.43}&\pmb{15.57\pm0.70}&16.31\pm0.84
\end{array} \]
\item\textbf{Loss function:}
\[ \begin{array}{c|ccc}&d=10&d=16&d=20\\\hline
\text{Default}&\pmb{5.06\pm0.41}&\pmb{15.60\pm0.79}&\pmb{16.11\pm0.77}\\
\text{Triangle}&7.18\pm0.54&19.15\pm0.85&19.28\pm0.79
\end{array} \]
\item\textbf{GNN layer type:}
\[ \begin{array}{c|ccc}&d=10&d=16&d=20\\\hline
\text{Default}&\pmb{5.06\pm0.41}&\pmb{15.60\pm0.79}&\pmb{16.11\pm0.77}\\
\text{GIN}&32.41\pm27.16&68.81\pm46.08&50.33\pm40.35\\
\text{GAT}&23.90\pm1.34&47.95\pm2.64&52.59\pm2.82\\
\text{GraphSAGE}&6.74\pm0.58&18.39\pm0.75&18.74\pm0.80
\end{array} \]
\item\textbf{GNN depth:}
\[ \begin{array}{c|ccc}&d=10&d=16&d=20\\\hline
0&5.51\pm0.48&16.70\pm0.79&15.01\pm0.78\\
\text{Default}&\pmb{5.06\pm0.41}&\pmb{15.60\pm0.79}&\pmb{16.11\pm0.77}\\
2&11.05\pm0.77&30.01\pm1.06&32.78\pm1.21\\
3&28.70\pm1.13&57.49\pm1.42&67.16\pm2.18\\
4&36.10\pm1.40&71.28\pm2.50&84.31\pm3.36
\end{array} \]
\item\textbf{Dropout rate:} 
\[ \begin{array}{c|ccc}&d=10&d=16&d=20\\\hline
\text{Default}&5.06\pm0.41&15.60\pm0.79&16.11\pm0.77\\
0.1&\pmb{4.78\pm0.44}&\pmb{14.84\pm0.69}&\pmb{15.32\pm0.74}\\
0.2&4.80\pm0.42&15.16\pm0.81&15.55\pm0.74
\end{array} \]
\item\textbf{Self-loops:}
\[ \begin{array}{c|ccc}&d=10&d=16&d=20\\\hline
\text{Default}&\pmb{5.06\pm0.41}&\pmb{15.60\pm0.79}&\pmb{16.11\pm0.77}\\
\text{Loops}&7.46\pm0.72&19.53\pm0.89&19.85\pm0.82
\end{array} \]
\end{itemize}

Here the encodings outperform the default for two \( d \) values, and the modified dropout rate of \( 0.1 \) outperforms the default for all three \( d \) values. However, the differences for the encodings are almost negligible, and the differences for the dropout rate are not significant. Therefore, we maintain the default, though we acknowledge that it is inconclusive whether changing the dropout rate would be beneficial.

\subsection{Analysis}

For the initial embedding \( \pmb{X} \), encodings do not help as much as one may have hoped. This makes sense since the initial embedding \( \pmb{X} \) is only used as a starting point for an optimization algorithm, and therefore it may not be as important as in other GNN applications to include meaningful information in \( \pmb{X} \).

The triangle loss function does not work as well as the first modified loss function, even though it should ostensibly have the same effect of penalizing monochromatic edges more heavily in dense parts of a graph. An explanation for this is that there often simply are not enough triangles for the triangle loss function to be meaningful. In an Erd\H os-R\'enyi graph, the expected number of triangles that each edge belongs to is \( (n-2)p^2=\frac{d^2(n-2)}{(n-1)^2}\approx\frac{d^2}{n} \), which is \( 0.5 \), \( 1.28 \), and \( 2 \) for \( n=200 \) and \( d=10 \), \( 16 \) and \( 20 \) respectively.

It is intuitively not surprising that it is detrimental to use self-loops. When the loss function penalizes monochromatic edges, the messages passed from a vertex to its neighbors during message-passing tell each neighbor to not be the same color as the vertex. Therefore, self-loops being introduced may result in vertices telling themselves not to be their own color, which could confuse the optimizer and make it think there is no good color to assign each vertex.

\section{Proofs}\label{proofs}

\begin{prop}\label{prop1}
For \( d\in(0,\infty) \), let \( k_d \) be the smallest positive integer \( k \) such that \( 2k\log(k)>d \). If \( p=d/(n-1) \), then \( \mathbb{P}(\chi(G(n,p))\in\{k_d,k_d+1\})\rightarrow1 \) as \( n\rightarrow\infty \).
\end{prop}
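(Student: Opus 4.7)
The plan is to sandwich $G(n, d/(n-1))$ between two Erdős–Rényi graphs of the form $G(n, d'/n)$ and apply the result of \cite{chi} at both ends. Write $p_1 = d/n$, $p_2 = d/(n-1)$, and $p_3 = (d+\varepsilon)/n$ for a small constant $\varepsilon > 0$ to be chosen. A direct calculation shows $p_1 < p_2$ for every $n \geq 2$, and $p_2 < p_3$ whenever $n > 1 + d/\varepsilon$, so for all sufficiently large $n$ there is a standard monotone coupling realizing $G(n,p_1) \subseteq G(n,p_2) \subseteq G(n,p_3)$ on a single probability space. Because chromatic number is monotone under edge addition, this yields the pointwise sandwich $\chi(G(n,p_1)) \leq \chi(G(n,p_2)) \leq \chi(G(n,p_3))$.

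Now I would apply the hypothesis from \cite{chi} at both ends: with probability tending to $1$, $\chi(G(n,p_1)) \geq k_d$ and $\chi(G(n,p_3)) \leq k_{d+\varepsilon}+1$. Combined with the sandwich, this confines $\chi(G(n,p_2))$ to $\{k_d,\dots,k_{d+\varepsilon}+1\}$ whp. To collapse this to $\{k_d, k_d+1\}$, I need to pick $\varepsilon$ small enough that $k_{d+\varepsilon} = k_d$. Since $k_d$ is defined as the smallest positive integer with $2k\log k > d$, the inequality $2 k_d \log k_d > d$ is \emph{strict}, so any choice
\[
\varepsilon \in \bigl(0,\; 2 k_d \log k_d - d\bigr)
\]
keeps $k_{d+\varepsilon} \leq k_d$, while $k_{d+\varepsilon} \geq k_d$ is automatic from the monotonicity of $d \mapsto k_d$. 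Fixing such an $\varepsilon$ (which depends only on $d$, not on $n$) and sending $n \to \infty$ gives $\mathbb{P}(\chi(G(n,p_2)) \in \{k_d, k_d+1\}) \to 1$.

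There is essentially no hard step in this plan; the coupling, the monotonicity of $\chi$, and a two-sided invocation of \cite{chi} do all the work. The one point that needs a moment's care is the behavior of $d \mapsto k_d$ at its jump points $d = 2k\log k$: one might worry these values of $d$ require separate treatment, but the strict inequality built into the definition of $k_d$ guarantees the gap $2 k_d \log k_d - d$ is strictly positive for every $d \in (0, \infty)$, so a single uniform choice of $\varepsilon$ handles all $d$ at once.
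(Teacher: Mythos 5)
Your proof is correct and follows essentially the same strategy as the paper's: a monotone coupling sandwich combined with two invocations of the result from \cite{chi}, using the fact that the interval $\{d' : k_{d'} = k_d\}$ is open on the right (equivalently, that $2k_d\log k_d - d > 0$ strictly) to choose a slightly larger parameter at which $k$ has not yet jumped. The only cosmetic difference is that you write $d+\varepsilon$ with $\varepsilon \in (0, 2k_d\log k_d - d)$ and present the two bounds via a single three-term sandwich, whereas the paper writes $d^{**} \in (d, d^*)$ with $d^* = \sup\{d' : k_{d'} = k_d\}$ and handles the lower and upper tails as two separate couplings.
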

\begin{proof}
No matter how \( p \) varies with \( n \), the statement that \( \mathbb{P}(\chi(G(n,p))\in\{k_d,k_d+1\})\rightarrow1 \) as \( n\rightarrow\infty \) is equivalent to the statement that \( \mathbb{P}(\chi(G(n,p))<k_d)\rightarrow0 \) and \( \mathbb{P}(\chi(G(n,p))>k_d+1)\rightarrow0 \) as \( n\rightarrow\infty \). Recall that if \( p_1\le p_2 \), then we can couple \( G(n,p_1) \) and \( G(n,p_2) \) such that \( G(n,p_1)\subseteq G(n,p_2) \) always. Since the chromatic number is monotone increasing under edge addition, it follows that increasing \( p \) from \( d/n \) to \( d/(n-1) \) decreases the probability \( \mathbb{P}(\chi(G(n,p))<k_d) \). Therefore, when \( p=d/(n-1) \), it already follows by the result of \cite{chi} that \( \mathbb{P}(\chi(G(n,p))<k_d)\rightarrow0 \) as \( n\rightarrow\infty \). On the other hand, let \( d^*=\sup(\{d'\in(0,\infty):k_{d'}=k_d\}) \). Notice that \( \{d'\in(0,\infty):k_{d'}=k_d\} \) is an interval containing \( d \) that is open on the right. Thus \( d<d^* \). Letting \( d^{**}\in(d,d^*) \), we have \( d<d^{**} \) and \( k_{d^{**}}=k_d \). By the result of \cite{chi}, when \( p=d^{**}/n \), we have \( \mathbb{P}(\chi(G(n,p))>k_d+1)\rightarrow0 \) as \( n\rightarrow\infty \), since \( k_{d**}=k_d \). Since \( d<d^{**} \), we have \( d/(n-1)<d^{**}/n \) for all sufficiently large \( n \). Therefore, taking \( p=d/(n-1) \) instead of \( p=d^{**}/n \) decreases \( \mathbb{P}(\chi(G(n,p))>k_d+1) \) for all sufficiently large \( n \). Thus, when \( p=d/(n-1) \), we have \( \mathbb{P}(\chi(G(n,p))>k_d+1)\rightarrow0 \) as \( n\rightarrow\infty \), which completes the proof.
\end{proof}

\begin{prop}\label{prop2}
Suppose we generate an Erd\H os-R\'enyi graph \( G(n,p) \), select an edge in the graph uniformly at random, and then run a randomized hard coloring algorithm on the graph. Let \( L \) be the loss of the coloring produced by the algorithm. Let \( r \) be the probability that the selected edge is monochromatic, unconditionally on which graph is generated and which edge is selected. To handle the edge case in which the graph is empty (which occurs with negligible probability in our experiments), say that this event does not occur since the graph has no edges at all and thus no monochromatic edges. If \( p=d/(n-1) \) for \( d\in(0,\infty) \) and either \( \mathbb{E}[L]=\omega(\sqrt{n}) \) or \( r=\omega(1/\sqrt{n}) \), then \( r \) is asymptotic to \( 2\mathbb{E}[L]/nd \) as \( n\rightarrow\infty \).
\end{prop}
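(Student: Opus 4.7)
The plan is to let $M = L$ denote the number of monochromatic edges and $E$ the number of edges of $G(n,p)$, with all randomness (graph and algorithm) on a common probability space. Conditioning first on the graph and the coloring, a uniformly chosen edge is monochromatic with probability $M/E$ when $E > 0$, and the event $E=0$ contributes nothing since it forces $M = 0$. Thus
$$r = \mathbb{E}\!\left[\frac{M}{E}\,\mathbbm{1}(E>0)\right].$$
Since $\bar{E} := \mathbb{E}[E] = \binom{n}{2}p = nd/2$, the target $r \sim 2\mathbb{E}[L]/(nd)$ is the same as $r \sim \mathbb{E}[M]/\bar{E}$. The first step is to reduce both hypotheses to $\mathbb{E}[M] = \omega(\sqrt{n})$. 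When $\mathbb{E}[L] = \omega(\sqrt{n})$ is assumed directly there is nothing to do. Otherwise, I would split $r$ on $\{E > \bar{E}/2\}$, use $M/E \le 1$ on the complement, and invoke Chebyshev with $\mathrm{Var}(E) \le \bar{E}$ to get
$$r \;\le\; \frac{2\mathbb{E}[M]}{\bar{E}} + \mathbb{P}\!\left(0 < E \le \bar{E}/2\right) \;\le\; \frac{2\mathbb{E}[M]}{\bar{E}} + O(1/n);$$
since $O(1/n) = o(r)$ when $r = \omega(n^{-1/2})$, this forces $\mathbb{E}[M]/\bar{E} = \omega(n^{-1/2})$, i.e.\ $\mathbb{E}[M] = \omega(\sqrt{n})$.

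With $\mathbb{E}[M] = \omega(\sqrt{n})$ in hand, the core of the argument is the expansion
$$r - \frac{\mathbb{E}[M]}{\bar{E}} \;=\; \mathbb{E}\!\left[\frac{M(\bar{E}-E)}{E\bar{E}}\,\mathbbm{1}(E>0)\right],$$
obtained by writing $1/E - 1/\bar{E} = (\bar{E}-E)/(E\bar{E})$ and using $M\,\mathbbm{1}(E=0) = 0$. I would bound the right-hand side by splitting on the concentration event $A = \{|E - \bar{E}| \le \bar{E}^{3/4}\}$. Chebyshev gives $\mathbb{P}(A^c) \le \mathrm{Var}(E)/\bar{E}^{3/2} = O(n^{-1/2})$, and on $A$ we have $E \ge \bar{E}/2$ for all large $n$ together with $|\bar{E}-E|/E = O(\bar{E}^{-1/4}) = O(n^{-1/4})$. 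Pulling this factor out yields an $A$-contribution of at most $O(n^{-1/4})\cdot \mathbb{E}[M]/\bar{E} = o(\mathbb{E}[M]/\bar{E})$.

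The main obstacle is the $A^c$-contribution, where we lose direct access to $\mathbb{E}[M]$ because $E$ can be small and $M/E$ as large as $1$. The plan is to use exactly that trivial bound $M/E \le 1$, which makes the integrand at most $|\bar{E}-E|/\bar{E}$, followed by Cauchy--Schwarz:
$$\mathbb{E}\!\left[\frac{|\bar{E}-E|}{\bar{E}}\,\mathbbm{1}(A^c)\right] \le \sqrt{\mathrm{Var}(E)/\bar{E}^2}\cdot\sqrt{\mathbb{P}(A^c)} = O(n^{-1/2})\cdot O(n^{-1/4}) = O(n^{-3/4}).$$
Under $\mathbb{E}[M] = \omega(\sqrt{n})$ this is $o(n^{-1/2}) = o(\mathbb{E}[M]/\bar{E})$, so the two contributions combine to $r = (\mathbb{E}[M]/\bar{E})(1+o(1)) = (2\mathbb{E}[M]/(nd))(1+o(1))$, as required. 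The $\omega(\sqrt{n})$ threshold on $\mathbb{E}[M]$ is exactly what is needed to make the crude $O(n^{-3/4})$ uniform error on $A^c$ negligible compared to the main term $\mathbb{E}[M]/\bar{E} = \Theta(\mathbb{E}[M]/n)$, which is why the theorem cannot drop either hypothesis.
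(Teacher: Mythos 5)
Your proof is correct, and it reaches the same conclusion via a genuinely different mechanism than the paper. Both arguments ultimately bound the same quantity: writing $E=|S|$ and $\bar E=\mathbb{E}[E]=nd/2$, one has
\[
\mathbb{E}[M]-r\bar E \;=\; \mathrm{Cov}(E,\,X_I)\;=\;\mathbb{E}\!\left[\frac{M(E-\bar E)}{E}\,\mathbbm{1}(E>0)\right],
\]
which is exactly the error term in your expansion after clearing denominators. The paper attacks the first form directly: it observes that $X_I$ is Bernoulli (so $\mathrm{SD}(X_I)\le\tfrac12$), that $E$ is Binomial (so $\mathrm{SD}(E)\le\sqrt{\bar E}$), and applies Cauchy--Schwarz to get the uniform bound $|\mathrm{Cov}(E,X_I)|\le\tfrac12\sqrt{nd/2}$ \emph{independently} of the hypotheses; it then simply divides by $\mathbb{E}[L]$ or by $\bar E\cdot r$ according to which of the two hypotheses is in force, so no preliminary reduction to a single hypothesis is needed. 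Your argument instead works with the right-hand form, which requires you to tame the ratio $M/E$ by truncation on the concentration set $A=\{|E-\bar E|\le\bar E^{3/4}\}$ plus the trivial bound $M/E\le 1$ off $A$, and to first reduce the $r=\omega(n^{-1/2})$ case to $\mathbb{E}[M]=\omega(\sqrt n)$ via a separate Chebyshev step. Both are sound and of comparable strength; the paper's covariance route is a bit slicker in that one inequality handles both hypotheses symmetrically and needs no truncation, while yours makes the role of edge-count concentration more explicit and would adapt more readily if, say, $X_I$ were replaced by an unbounded score for which the $\mathrm{SD}(X_I)\le\tfrac12$ trick is unavailable.
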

\begin{proof}
Let \( N=\binom{n}{2} \). For all \( i\in\{1,\dots,N\} \), let \( X_i \) be the indicator that there is a monochromatic edge between the \( i \)th pair of vertices, under some ordering of the \( N \) pairs of vertices in the graph. Let \( S\subseteq\{1,\dots,N\} \) be the random set consisting of indices \( i \) for which there is an edge between the \( i \)th pair of vertices. Let \( I \) be a random index in \( \{1,\dots,N\} \) whose conditional distribution given \( S \) is uniform over \( S \) independently of \( X_1,\dots,X_N \). Then \( L=\sum_{s\in S}X_s \) and \( r=\mathbb{E}[X_I] \). To be consistent with the statement of the proposition, the edge case in which \( S=\emptyset \) is handled by defining \( I \) arbitrarily, defining \( X_I=0 \), and defining the empty sum \( \sum_{s\in\emptyset}X_s \) to be \( 0 \) as usual. When \( S\ne\emptyset \), we have \[ \mathbb{E}[X_I\mid X_1,\dots,X_N,S]=\frac{1}{|S|}\sum_{s\in S}X_s=\frac{L}{|S|}, \] and therefore \[ \mathbb{E}[|S|X_I\mid X_1,\dots,X_N,S]=L. \] Meanwhile, when \( S=\emptyset \), we have \[ \mathbb{E}[|S|X_I\mid X_1,\dots,X_N,S]=0=L. \] In both cases, we have \[ \mathbb{E}[|S|X_I\mid X_1,\dots,X_N,S]=L, \] and it therefore follows by the law of total expectation that \[ \mathbb{E}[L]=\mathbb{E}[|S|X_I]=\mathbb{E}[|S|]\mathbb{E}[X_I]+\text{Cov}(|S|,X_I). \] Since \( X_I \) is Bernoulli distributed, its variance is at most \( 1/4 \), and thus its standard deviation is at most \( 1/2 \). Since \( |S| \) is Binomially distributed, its variance is at most equal to its mean, and therefore \( \text{SD}(|S|)\le\sqrt{\mathbb{E}[|S|]}=\sqrt{nd/2}. \) By the Cauchy-Schwarz inequality, we conclude that \[ \left|\text{Cov}(|S|,X_I)\right|\le\frac{\sqrt{nd/2}}{2}. \] Since \( \mathbb{E}[|S|]=nd/2 \), it also follows that \[ \left|\frac{\text{Cov}(|S|,X_I)}{\mathbb{E}[|S|]}\right|\le\frac{1}{2\sqrt{nd/2}}. \] The previous equation rearranges to \[ \frac{\mathbb{E}[|S|]\mathbb{E}[X_I]}{\mathbb{E}[L]}-1=\frac{-1}{\mathbb{E}[L]}\text{Cov}(|S|,X_I), \] and therefore if \( \mathbb{E}[L]=\omega(\sqrt{n}) \), then \[ \frac{\mathbb{E}[|S|]\mathbb{E}[X_I]}{\mathbb{E}[L]}\rightarrow1 \] as \( n\rightarrow\infty \). Likewise, the previous equation rearranges to \[ \frac{\mathbb{E}[L]}{\mathbb{E}[|S|]\mathbb{E}[X_I]}-1=\frac{1}{\mathbb{E}[X_I]}\frac{\text{Cov}(|S|,X_I)}{\mathbb{E}[|S|]}, \] and therefore if \( \mathbb{E}[X_I]=\omega(1/\sqrt{n}) \), then \[ \frac{\mathbb{E}[L]}{\mathbb{E}[|S|]\mathbb{E}[X_I]}\rightarrow1 \] as \( n\rightarrow\infty \). Since \( \mathbb{E}[|S|]=nd/2 \), this completes the proof.
\end{proof}

\section{Random Maximally Planar Graphs and Their Replicas}\label{random_planar}

There are many ways to generate a random maximally planar graph. For the experiment in Section \ref{4chromatic}, we did so by initializing an empty graph, maintaing a list of pairs of vertices that did not yet have an edge between them, and repeatedly shuffling the list and adding the first edge that did not prevent the resulting graph from being planar. We repeated this until no more edges could be added. As expected, the final edge was always the \( 594 \)th one. This strategy is feasible using the NetworkX function \texttt{is\_planar}, which can quickly check whether a graph is planar. The strategy is quite inefficient compared to others, but it was fast enough for our purposes, and it ensured that the distribution of the resulting maximally planar graph was ``uniform'' in a loose sense.

To generate a ``replica'' of each maximally planar graph, meaning a new graph that is not necessarily planar but has an almost identical degree sequence, we employed a similar strategy. We initialized an empty graph, maintained a list of pairs of vertices that did not yet have an edge between them, and repeatedly shuffled the list and added the first edge such that each entry in the sorted degree sequence of the resulting graph was at most equal to the corresponding entry in the sorted degree sequence of the maximally planar graph. We repeated this until no more edges could be added. This greedy strategy is not always able to add all 594 edges, but when it does, the resulting graph has the same degree sequence as the maximally planar graph. Each of our replica graphs had at least \( 592 \) edges, and many of them had \( 594 \).

\end{document}